\newtheorem{Theorem}{Theorem}[part]
\newtheorem{Definition}{Definition}[part]
\newtheorem{Proposition}{Proposition}[part]
\newtheorem{Assumption}{Assumption}[part]
\newtheorem{Lemma}{Lemma}[part]
\newtheorem{Remark}{Remark}[part]
\newtheorem{Example}{Example}[part]
\makeatletter \@addtoreset{equation}{section}
\newcommand{\No}[1]{\left\|#1\right\|}     
\newcommand{\abs}[1]{\left|#1\right|}     
\def \E{\mathbb{E}}
\def \F{\mathbb{F}}
\def \N{\mathbb{N}}
\def \P{\mathbb{P}}
\def \R{\mathbb{R}}
\def\Fc{{\cal F}}
\def\Kc{{\cal K}}
\def\Mc{{\cal M}}
\def\Pc{{\cal P}}
\def\Tc{{\cal T}}
\def\Yc{{\cal Y}}
\def\Zc{{\cal Z}}
\def \Sum{\displaystyle\sum}
\def\esup{{\rm ess \, sup}}
\def\1{{\bf 1}}
\def \ep{\hbox{ }\hfill{ ${\cal t}$~\hspace{-5.1mm}~${\cal u}$   } }
\def \proof{{\noindent \bf Proof. }}
\def \ep{\hbox{ }\hfill$\Box$}
\def\eps{\varepsilon}
\def\reff#1{{\rm(\ref{#1})}}
\def\0{\mathbf{0}}
 \title{Corrigendum for "Second-order reflected backward stochastic differential equations" and "Second-order BSDEs with general reflection and game options under uncertainty"\footnote{The authors would like to express their gratitude towards Roxana Dumitrescu, Ying Hu, Marie-Claire Quenez, Abdoulaye Soumana Hima, and especially to Jianfeng Zhang for directing us to the gap in our previous work.}}
 \author{Anis {\sc Matoussi} \thanks{CMAP, Ecole Polytechnique, Paris, and Universit\'e du Maine, Le Mans, \texttt{anis.matoussi@univ-lemans.fr}.} \and Dylan {\sc Possama\"{i}} \footnote{Universit\'e Paris--Dauphine, PSL Research University, CNRS, CEREMADE, 75016 Paris, France, \texttt{possamai@cere-} \texttt{made.dauphine.fr}.} \and Chao {\sc Zhou} \footnote{Department of Mathematics National University of Singapore, \texttt{matzc@nus.edu.sg}}}
             \date{\today}
\begin{document}

\maketitle

\begin{abstract}
\noindent The aim of this short note is to fill in a gap in our earlier paper \cite{matoussi2013second} on {\rm {\rm 2BSDE}s} with reflections, and to explain how to correct the subsequent results in the second paper \cite{matoussi2014second}. We also provide more insight on the properties of 2{\rm RBSDE}s, in the light of the recent contributions \cite{li2017reflected,soumana2017equations} in the so-called $G-$framework. \vspace{5mm}

\noindent{\bf Key words:} {\rm {\rm 2BSDE}s}, reflections, Skorokhod condition.
\vspace{5mm}

\noindent{\bf AMS 2000 subject classifications:} 60H10, 60H30.

\end{abstract}

\section{Introduction}
In this short note, we fill in a gap in our earlier wellposedness result on so-called second-order reflected {\rm {\rm 2BSDE}s} (2{\rm RBSDE}s for short). The issue stemmed from a wrongly defined minimality condition which ensures uniqueness of the solution, which we correct here. We also use this occasion to prove that an alternative minimality condition, taking the form of a Skorokhod-like condition, leads to the exact same solution, provided that an additional assumption linked to the oscillations of the lower obstacle is added (see Assumption \ref{assum} below). This new condition appeared recently in the two contributions \cite{li2017reflected,soumana2017equations} on reflected $G-$BSDEs, and our result proves that the two notions do coincide, and that our formulation produces more general results. 

\vspace{0.5em}
Since some of the results of \cite{matoussi2013second} were used in our subsequent paper \cite{matoussi2014second} considering doubly reflected {\rm {\rm 2BSDE}s}, we also explain how to change the minimality condition there, as well as which results are impacted by this change. Roughly speaking, all our previous results still hold true, except for the {\it a priori} estimates, where we no longer control the total variation of the bounded variation process appearing in the solution, but only its $\mathbb D^{2,\kappa}_H-$norm.

\section{The lower reflected case}
The notations in this section are the ones in \cite{matoussi2013second}. Our only change is that to remain coherent with the notations in the next section, we will denote the lower obstacle in the 2{\rm RBSDE} by $L$ instead of $S$.

\subsection{The gap}
The mistake in the paper \cite{matoussi2013second} can be found right after Equation (3.3), when we try to prove that the process $K^{\P'}-k^{\P'}$ is non-decreasing. Indeed, appealing to the minimality condition $(2.6)$ in \cite{matoussi2013second} does not imply the sub--martingality of this process, since it only gives the required inequality for any $t\in[0,T]$ and the {\it fixed} time $T$. The end of Step $(ii)$ of the proof of Theorem 3.1 in \cite{matoussi2013second} therefore does not go through. The issue here is that the minimality condition $(2.6)$ that we wrote is only adapted to the case where the generator $F$ is $0$, described in Remark 3.1 in \cite{matoussi2013second}, and it has to be modified. One could argue there that $K^{\P'}-k^{\P'}$ might still be non-decreasing, and that an appropriate minimality condition should reflect this fact. 

\vspace{0.5em}
However, $K^{\P'}-k^{\P'}$ is not non--decreasing in general. The issue was actually partially pointed out in Remark 3.6 of \cite{matoussi2014second}. It is explained there (and the proof of this result is independent of the mistake in the minimality condition) that on the event $\{Y_{t^-}=L_{t^-}\}$, one has $K^\P=k^\P$, $\P-a.s.$, for any $\P\in\Pc_H^\kappa$, meaning that $K^\P-k^\P$ is constant (and thus non--decreasing) as long as $Y_{t^-}=L_{t^-}$. Similarly, the Skorokhod condition satisfied by $k^\P$ implies that $K^\P-k^\P$ is still non-decreasing on the event $\{y^\P_{t^-}>L_{t^-}\}$. But there is nothing we can say on the event $\{Y_t>y^\P_{t^-}=L_{t^-}\}$. Also, the following counter--example, communicated to us by Jianfeng Zhang, proves that $K^{\P'}-k^{\P'}$ is not non--decreasing in general.
 
 \begin{Example}[Jianfeng Zhang]
 Fix $T=2$ and take as a lower obstacle a process $L$ satisfying the required assumptions in {\rm \cite{matoussi2013second}} as well as
 $$L_t:=2(1-t),\; 0\leq t\leq 1,\; \text{and}\; L_t\leq 2,\; 1\leq t\leq 2.$$
 Furthermore, take the generator $F$ of the {\rm $2$RBSDE} to be $0$, and the terminal condition to be $L_2$. In this case, the solution to the 2{\rm RBSDE} being necessarily the supremum of the solutions to the associated {\rm RBSDE}s, we will have automatically the representations
 $$Y_t=\underset{\mathbb P'\in\Pc^\kappa_H(t^+,\P)}{{\rm essup}^\P}\; \underset{\tau\in\Tc_{t,T}}{{\rm essup}^\P}\; \mathbb E^{\mathbb P'}[L_\tau|\Fc_T],\; y_t^\P=\underset{\tau\in\Tc_{t,T}}{{\rm essup}^\P}\;\mathbb E^{\mathbb P'}[L_\tau|\Fc_T].$$
 Furthermore, in this case since $F=0$, $K^{\P'}-k^{\P'}$ being a $\P'-$sub--martingale is equivalent to $Y-y^{\P'}$ being a $\P'$-supermartingale, which would imply in particular that
 \begin{equation}\label{eq:counter}
 Y_0-y_0^{\P'}\geq \mathbb E^{\mathbb P'}\big[Y_1-y_1^{\P'}\big].
 \end{equation}
 However, it is clear by definition of $L$ that $Y_0=y_0^{\P'}=2$. However, there is absolutely no reason why in general one could not have, for some $\P'$, and for an appropriate choice of $L$, $Y_1>y_1^{\P'}$ $($recall that we always have $Y_1\geq y_1^{\P'})$, at least with strictly positive $\P'-$probability, which then contradicts \eqref{eq:counter}. 
 \end{Example}
 
\subsection{The new minimality condition and uniqueness}
 This being clarified, let us now explain what should be the appropriate minimality condition replacing (2.6) in \cite{matoussi2013second}. Using the Lipschitz property of $F$ (see Assumption 2.3(iii) in \cite{matoussi2013second}), we can define bounded functions $\lambda:[0,T]\times\Omega\times\R\times\R\times\R^d\times\R^d\times D_H\longrightarrow R$ and $\eta:[0,T]\times\Omega\times\R\times\R\times\R^d\times\R^d\times D_H\longrightarrow R^d$ such that for any $(t,\omega,y,y',z,z',a)$
   \begin{equation}\label{eqF}
 F_t(\omega,y,z,a)-F_t(\omega,y',z',a)=\lambda_t(\omega,y,y',z,z',a)(y-y')+\eta_t(\omega,y,y',z,z',a)\cdot a^{1/2}(z-z').
  \end{equation}
 Define then for any $\P\in\Pc^\kappa_H$, and for any $t\in[0,T]$ the process
  \begin{equation}\label{eqM}
 M_s^{t,\P}:=\exp\left(\int_t^s\bigg(\lambda_u-\frac12|\eta_u|^2\bigg)(Y_u,y_u^\P,Z_u,z_u^\P,\widehat a_u)du-\int_t^s\eta_u(Y_u,y_u^\P,Z_u,z_u^\P,\widehat a_u)\cdot \widehat a_u^{-1/2}dB_u\right).
 \end{equation}
 Following the arguments in the beginning of the proof of Theorem 3.1 in \cite{matoussi2013second}, we have then for any $t\in[0,T]$, any $\P\in\Pc_H^\kappa$ and any $\P'\in\Pc_H^\kappa(t^+,\P)$
 \begin{equation}\label{eq}
 Y_t-y_t^{\P'}=\mathbb E_t^{\P'}\left[\int_t^TM^{t,\P'}_sd\big(K^{\P'}_s-k^{\P'}_s\big)\right],\; \P-a.s.
 \end{equation}
 Therefore, the representation formula 
 $$Y_t=\underset{\mathbb P'\in\Pc^\kappa_H(t^+,\P)}{{\rm essup}^\P}\; y_t^{\P'},\; \P-a.s.,$$
 is equivalent to the new minimality condition
\begin{equation}\label{eq:new}
\underset{\mathbb P'\in\Pc^\kappa_H(t^+,\P)}{{\rm essinf}^\P}\; \mathbb E_t^{\P'}\left[\int_t^TM^{t,\P'}_sd\big(K^{\P'}_s-k^{\P'}_s\big)\right]=0,\; \P-a.s.
\end{equation}
If one replaces the minimality condition (2.6) in \cite{matoussi2013second} by \eqref{eq:new} above, as well as in the statement of Theorem 3.1 in \cite{matoussi2013second} the representation $(3.1)$ by simply
\begin{equation}\label{eq:rep}
Y_t=\underset{\mathbb P'\in\Pc^\kappa_H(t^+,\P)}{{\rm essup}^\P}\; y_t^{\P'},\; \P-a.s.,
\end{equation}
then the proof of \eqref{eq:rep} is immediate as soon as one has proved \eqref{eq}. This allows us to recover uniqueness of the solution (see Section \ref{sec.detailed} below for details).

\begin{Remark}
The representation formula $(3.1)$ in {\rm \cite{matoussi2013second}} does not only involve $t$ and $T$, but any pair $0\leq t\leq s\leq T$. If one only assumes the new minimality condition \eqref{eq:new}, then it cannot be proved immediately that 
$$Y_t=\underset{\mathbb P'\in\Pc^\kappa_H(t^+,\P)}{{\rm essup}^\P}\; 
y_t^{\P'}(s,Y_s),\; \P-a.s.$$
However, once we have proved that the solution $Y$ of the {\rm 2RBSDE} satisfies the dynamic programming principle, then the above is immediate. Furthermore, the case $s=T$ is enough to obtain uniqueness, which is the purpose of Theorem 3.1 in {\rm \cite{matoussi2013second}}.
\end{Remark}

\begin{Remark}
Since in general $K^{\P'}-k^{\P'}$ is not non--decreasing, we cannot reduce \eqref{eq:new} to a statement involving only $K^{\P'}$ and $k^{\P'}$, as is the case for non--reflected {\rm {\rm 2BSDE}s}, see for instance {\rm \cite{soner2012wellposedness}}. However, when $L=-\infty$ and there is no reflection, $k^{\P'}$ becomes identically $0$, and \eqref{eq:new} is indeed equivalent to 
$$\underset{\mathbb P'\in\Pc^\kappa_H(t^+,\P)}{{\rm essinf}^\P}\; \mathbb E_t^{\P'}\left[K^{\P'}_T-K^{\P'}_t\right]=0,\; \P-a.s.,$$
see the arguments in Step $(ii)$ of the proof of {\rm \cite[Theorem 4.3, Theorem 4.6]{soner2012wellposedness}}.

\vspace{0.5em}
The need to depart from the "standard" minimality condition has also been pointed out by Popier and Zhou {\rm \cite{popier2019second}}, when dealing with {\rm {\rm 2BSDE}s} under a monotonicity condition, with hypotheses relaxing the earlier work {\rm\cite{possamai2013second1}}. 
\end{Remark}

\begin{Remark}
As a sanity check, let us verify here that the new minimality condition \eqref{eq:new} indeed allows to recover the classical {\rm RBSDE} theory when $\Pc^\kappa_H$ is reduced to a singleton $\{\P\}$. In this case, \eqref{eq:new} says exactly that the bounded variation process $\int_0^\cdot M^{0,\P}_sd\big(K^{\P}_s-k^{\P}_s\big)$ is a $\P$--martingale. Since the filtration $\overline{\F}^\P$ satisfies the predictable martingale representation property, it means that this process is identically $0$. Now since $M^{0,\P}$ is $\P-a.s.$ positive, this implies that $K^\P=k^\P$, which is the desired property. 
\end{Remark}

\subsection{Recovering existence}
The second instance of the use of the wrong conclusion that $K^\P-k^\P$ was non-decreasing in \cite{matoussi2013second} is in the existence proof, during the discussion after Equation $(4.6)$. At this point, the last thing to prove is that $K$ satisfies the new minimality condition \eqref{eq:new}. However, we already have the result of Proposition 4.2 in \cite{matoussi2013second} which shows that the process $V^+$ satisfies the representation formula \eqref{eq:rep}. Therefore, the fact that \eqref{eq:new} is indeed satisfied is immediate, since both statements are equivalent (see Section \ref{sec.detailed} below for details).

\vspace{0.5em}
To summarise, one should replace Definition 2.3 in \cite{matoussi2013second} by the following, and use the corrections explained above in the proofs.

\begin{Definition}\label{def1}
For $\xi \in \mathbb L^{2,\kappa}_H$, we say $(Y,Z)\in \mathbb D^{2,\kappa}_H\times\mathbb H^{2,\kappa}_H$ is a solution to the {\rm 2RBSDE} if
\begin{itemize}
\item[$\bullet$] $Y_T=\xi$, and $Y_t\geq L_t$, $t\in[0,T]$, $\mathcal P_H^\kappa-q.s$.
\item[$\bullet$] $\forall \mathbb P \in \mathcal P_H^\kappa$, the process $K^{\mathbb P}$ defined below has non-decreasing paths $\mathbb P-a.s.$
\begin{equation}
K_t^{\mathbb P}:=Y_0-Y_t - \int_0^t\widehat{F}_s(Y_s,Z_s)ds+\int_0^tZ_sdB_s, \text{ } 0\leq t\leq T, \text{  } \mathbb P-a.s.
\label{2BSDE.kref}
\end{equation}

\item[$\bullet$] We have the following minimality condition
\begin{equation*}
\underset{\mathbb P'\in\Pc^\kappa_H(t^+,\P)}{{\rm essinf}^\P}\; \mathbb E_t^{\P'}\left[\int_t^TM^{t,\P'}_sd\big(K^{\P'}_s-k^{\P'}_s\big)\right]=0
, \text{  } \mathbb P-a.s., \text{ } 0\leq t\leq T, \text{ } \forall \mathbb P \in \mathcal P_H^\kappa.
\end{equation*}
\end{itemize}
\end{Definition}

\subsection{Detailed proofs}\label{sec.detailed}
For the ease of the reader, we give the details of the proof for the uniqueness, which is a correction of Theorem 3.1 in {\rm \cite{matoussi2013second}}.

\begin{Theorem}\label{uniqueref}
Let Assumptions 2.1 and 2.2 in {\rm \cite{matoussi2013second}} hold. Assume $\xi \in \mathbb{L}^{2,\kappa}_H$ and that $(Y,Z)$ is a solution to $2${\rm RBSDE} in Definition \ref{def1}. Then, for any $\mathbb{P}\in\mathcal{P}^\kappa_H$ and $0\leq t \leq T$,

\begin{align}
\label{representationref}
Y_{t}&=\underset{\mathbb{P}^{'}\in\mathcal{P}^\kappa_H(t^+,\mathbb{P})}{\esup^\mathbb{P}}y_{t}^{\mathbb{P}^{'}}, \text{ }\mathbb{P}-a.s.
\end{align}

Consequently, the $2${\rm RBSDE} in Definition \ref{def1} has at most one solution in $ \mathbb D^{2,\kappa}_H\times\mathbb H^{2,\kappa}_H$.
\end{Theorem}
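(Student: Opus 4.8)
The plan is to read the representation \eqref{representationref} directly off the identity \eqref{eq} together with the minimality condition built into Definition \ref{def1}, and then to deduce uniqueness from \eqref{representationref}.

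First I would re-establish \eqref{eq}, which is the real engine of the argument. Fixing $\P\in\Pc^\kappa_H$, $t\in[0,T]$ and $\P'\in\Pc^\kappa_H(t^+,\P)$, I would write the equation \eqref{2BSDE.kref} defining $Y$ under $\P'$ alongside the reflected equation solved by $y^{\P'}$, subtract them, and set $\delta Y:=Y-y^{\P'}$, $\delta Z:=Z-z^{\P'}$. The linearisation \eqref{eqF} of $\widehat F$ along the two solutions turns the driver difference into $\lambda\,\delta Y+\eta\cdot\widehat{a}^{1/2}\delta Z$, so that $\delta Y$ solves a \emph{linear} backward equation with terminal value $0$ and finite-variation increment $d(K^{\P'}-k^{\P'})$. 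Multiplying by the exponential $M^{t,\P'}$ of \eqref{eqM} --- precisely the integrating factor absorbing the linear drift and the $\eta\cdot\widehat{a}^{-1/2}dB$ term --- applying It\^o's formula and taking $\P'$-conditional expectation makes the stochastic integral vanish and yields \eqref{eq}. The one point requiring care is integrability: I would invoke the boundedness of $\lambda,\eta$ and the $\mathbb D^{2,\kappa}_H\times\mathbb H^{2,\kappa}_H$ membership of the solutions to ensure $\int M^{t,\P'}\delta Z\,dB$ is a genuine, not merely local, martingale, exactly as at the beginning of the proof of Theorem 3.1 in \cite{matoussi2013second}.

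Next I would combine \eqref{eq} with the minimality condition. Since $Y_t$ does not depend on $\P'$, the elementary identity $\einf_{\P'}(Y_t-y_t^{\P'})=Y_t-\esup_{\P'}y_t^{\P'}$ holds $\P$-a.s., and substituting \eqref{eq} into the left-hand side turns the minimality condition \eqref{eq:new} into $Y_t-\esup_{\P'}y_t^{\P'}=0$, which is exactly \eqref{representationref}. As a consistency check, positivity of $M^{t,\P'}$ together with the RBSDE comparison principle gives $Y_t\geq y_t^{\P'}$ for each $\P'$, in agreement with \eqref{representationref}; I would stress that this nonnegativity cannot be obtained from monotonicity of $K^{\P'}-k^{\P'}$, which fails in general by the counter-example above, but only from comparison applied to the supersolution $Y$.

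Finally, uniqueness follows because the right-hand side of \eqref{representationref} involves only the RBSDE solutions $y^{\P'}$, which are uniquely pinned down by the data $(\xi,F,L)$ and $\P'$ and are oblivious to the particular $2${\rm RBSDE} solution. Thus if $(Y^1,Z^1)$ and $(Y^2,Z^2)$ both solve Definition \ref{def1}, each $Y^i$ equals the same essential supremum $\P$-a.s. for every $\P\in\Pc^\kappa_H$, whence $Y^1=Y^2$, $\Pc^\kappa_H$-q.s. Equality of the control processes is then standard: under each $\P$ one has $Z_t\,\widehat{a}_t\,dt=d\langle Y,B\rangle_t$, so $Y^1=Y^2$ forces $\langle Y^1,B\rangle=\langle Y^2,B\rangle$ and hence $Z^1=Z^2$, $\widehat{a}_t\,dt\times d\P$-a.e., for all $\P\in\Pc^\kappa_H$. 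I expect the main obstacle to be the rigorous justification of \eqref{eq} --- specifically the integrability needed to pass from the pathwise linear identity to the conditional-expectation formula --- since everything downstream is a short measure-theoretic manipulation of essential infima and suprema.
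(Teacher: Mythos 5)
Your proposal is correct and follows essentially the same route as the paper: linearise via \eqref{eqF}, use the integrating factor $M^{t,\P'}$ and It\^o's formula to obtain \eqref{eq}, then combine with the minimality condition through the identity $\einf_{\P'}(Y_t-y_t^{\P'})=Y_t-\esup_{\P'}y_t^{\P'}$, and finally recover $Z$ from $d\langle Y,B\rangle_t=Z_t\,d\langle B\rangle_t$. The only cosmetic difference is that the paper separately establishes the inequality $Y_t\geq y_t^{\P'}$ in a first step via a generalised comparison theorem (decomposing $K^{\P'}$ into parts acting on and off the obstacle), whereas you correctly observe that \eqref{eq} plus \eqref{eq:new} already yields both directions at once.
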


\vspace{1em}
\proof
\vspace{0.5em}
We start by proving \reff{representationref}.

\begin{itemize}[leftmargin=*]
\item[\rm{(i)}] Fix $0\leq t\leq T$ and $\mathbb P\in\mathcal P^\kappa_H$. For any $\mathbb P^{'}\in\mathcal P^\kappa_H(t^+,\mathbb P)$, we have
$$Y_{t} = \xi + \int_{t}^{T} \widehat{F}_s(Y_s,Z_s)ds - \int_{t}^{T} Z_sdB_s + K_{T}^{\mathbb P^{'}}-K_{t}^{\mathbb P^{'}},  \text{ } \mathbb P^{'}-a.s.$$

Now, it is clear that we can always decompose the non-decreasing process $K^\mathbb P$ into
$$K^{\mathbb P^{'}}_t=A_t^{\mathbb P^{'}}+B_t^{\mathbb P^{'}},\text{ }\mathbb P^{'}-a.s.,$$
were $A^{\mathbb P^{'}}$ and $B^{\mathbb P^{'}}$ are two non-decreasing processes such that $A^{\mathbb P^{'}}$ only increases when $Y_{t^-}=S_{t^-}$ and $B^{\mathbb P^{'}}$ only increases when $Y_{t^{-}}>S_{t^-}$. With that decomposition, we can apply a generalisation of the usual comparison theorem proved by El Karoui et al. (see Theorem $8.3$ in \cite{el2008backward}), whose proof is given in the appendix of {\rm \cite{matoussi2013second}}, under $\mathbb P^{'}$ to obtain $Y_{t}\geq y_{t}^{\mathbb P^{'}}$ and $A_{T}^{\mathbb P^{'}}-A_{t}^{\mathbb P^{'}}\leq k^{\P^{'}}_{T}-k^{\P^{'}}_{t},\ \mathbb P^{'}-a.s.$ Since $\mathbb P^{'}=\mathbb P$ on $\mathcal F_t^+$, we get $Y_{t}\geq y_{t}^{\mathbb P^{'}}$, $\mathbb P-a.s.$ and thus
$$Y_{t}\geq\underset{\mathbb{P}^{'}\in\mathcal{P}^\kappa_H(t^+,\mathbb{P})}{\esup^\mathbb{P}}y_{t}^{\mathbb{P}^{'}}, \text{ }\mathbb{P}-a.s.$$

\item[\rm{(ii)}] We now prove the reverse inequality. Fix $\mathbb P\in\mathcal P^\kappa_H$. For every $\mathbb P^{'}\in \mathcal P^\kappa_H(t^+,\mathbb P)$, denote
$$\delta Y:=Y-y^{\mathbb P^{'}},\text{ }\delta Z:=Z-z^{\mathbb P^{'}} \text{ and } \delta K^{\mathbb P^{'}}:=K^{\mathbb P^{'}}-k^{\mathbb P^{'}}.$$

As in (\ref{eqF}), there exist two bounded processes $\lambda$ and $\eta$ such that for all $t\leq T$
$$\delta Y_t=\int_t^{T}\left(\lambda_s\delta Y_s+\eta_s\widehat{a}_s^{1/2}\delta Z_s\right)ds-\int_t^{T}\delta Z_sdB_s+\delta K_{T}^{\mathbb P^{'}}-\delta K_{t}^{\mathbb P^{'}},\text{ }\mathbb P^{'}-a.s.$$

\vspace{0.5em}
Then, by It\^o's formula, we obtain
\begin{equation}
\label{brubru}
\delta Y_{t}=\mathbb E_{t}^{\mathbb P^{'}} \left[\int_{t}^{T}M^{t,\mathbb P^{'}}_s d\delta K_s^{\mathbb P^{'}}\right],
\end{equation}
where $M^{t,\mathbb P^{'}}_s$ is defined in (\ref{eqM}). Taking the essential infimum on both sides, then (\ref{eq:new}) implies (\ref{representationref}).
\end{itemize}\ep

For the existence, there are only changes in the case when $\xi$ is in $\rm{UC_b}(\Omega)$. And we only give details for the part where the minimality condition is concerned.  In Proposition 4.2 in {\rm \cite{matoussi2013second}}, we proved that the process $V^+$ satisfies the representation formula 
\begin{equation}\label{eq:repV}
V_t^+=\underset{\mathbb P^{'}\in\mathcal P^{\kappa}_H(t^+,\mathbb P)}{\esup^\mathbb P}y_t^{\mathbb P^{'}}, \text{ }\mathbb P-a.s., \text{ }\forall \mathbb P\in\mathcal P_H^{\kappa}.
\end{equation}

We have to check that the minimality condition \reff{eq:new} holds. Fix $\mathbb P$ in $\mathcal P^\kappa_H$ and $\mathbb P^{'}\in\mathcal P^\kappa_H(t^+,\mathbb P)$. By the Lipschitz property of $F$, we know that there exists bounded processes $\lambda$ and $\eta$ as in (\ref{eqF}) such that
\begin{align}\label{bidulou2}
\nonumber V^{+}_t-y_t^{\mathbb P^{'}}&=\int_t^T\left[\lambda_s(V^{+}_s-y_s^{\mathbb P^{'}})+\eta_s\widehat a_s^{1/2}(\overline Z_s-z_s^{\mathbb P^{'}})\right]ds-\int_t^T\widehat a_s^{1/2}(\overline Z_s-z_s^{\mathbb P^{'}})\widehat a^{-1/2}_sdB_s\\
&+K_T^{\mathbb P^{'}}-K_t^{\mathbb P^{'}}-k_T^{\mathbb P^{'}}+k_t^{\mathbb P^{'}}.
\end{align}

\vspace{0.5em}
Then, by It\^o's formula, we obtain
$$V^+_t-y_t^{\mathbb P^{'}}=\mathbb E^{\mathbb P^{'}}_t\left[\int_t^TM_{s}^{t,\mathbb P^{'}}d(K_s^{\mathbb P^{'}}-k^{\mathbb P^{'}}_s)\right].$$

\vspace{0.5em}
Taking the essential infimum on both sides, \reff{eq:repV} implies that the minimality condition \reff{eq:new} is satisfied.

\medskip
Now since 
$$Y_{t}=\underset{\mathbb{P}^{'}\in\mathcal{P}^\kappa_H(t^+,\mathbb{P})}{\esup^\mathbb{P}}y_{t}^{\mathbb{P}^{'}}, \text{ }\mathbb{P}-a.s., \text{ } t\in [0,T], \text{ for all }\mathbb P\in \mathcal{P}^\kappa_H,$$
$Y$ is thus unique. Then, since we have that $d\left<Y,B\right>_t=Z_td\left<B\right>_t, \text{ } \mathcal{P}^\kappa_H-q.s.$, $Z$ is also unique. Finally, the process $K^\mathbb P$ is uniquely determined. 

\subsection{An alternative: Skorokhod minimality condition}
Readers familiar with the theory of standard reflected BSDEs should be wondering whether there is an equivalent, in the second--order setting, of the so--called Skorokhod condition. The latter states that the non-decreasing process appearing in the definition of a {\rm RBSDE} acts in a minimal way, only when the solution actually reaches the obstacle, and implies uniqueness of the solution (see the seminal paper \cite{el1997reflected} for more details). There are actually two recent papers which treat the very related problem of reflected $G-$BSDEs, namely \cite{li2017reflected,soumana2017equations}, and which use a generalisation of this condition. The aim of this section is to show that this condition also implies wellposedness in our framework, under an additional assumption on the obstacle $L$, and that the two definitions are actually equivalent. We also provide a more detailed comparison between \cite{li2017reflected,soumana2017equations} and our work at the end of the section.

\subsubsection{Wellposedness under Skorokhod condition}
Using the same notations as before, the Skorokhod condition for {\rm 2RBSDE}s reads
\begin{equation}\label{eq:sko}
\underset{\mathbb P'\in\Pc^\kappa_H(t^+,\P)}{{\rm essinf}^\P}\; \mathbb E_t^{\P'}\bigg[\int_t^T\big(Y_{s^-}-L_{s^-}\big)dK_s^{\P'}\bigg]=0,\; t\in[0,T],\; \P-a.s.,\; \forall\P\in\Pc^\kappa_H.
\end{equation}

For ease of reference, we provide the corresponding alternative definition of a solution to the 2{\rm RBSDE}.
\begin{Definition}\label{def2}
For $\xi \in \mathbb L^{2,\kappa}_H$, we say $(Y,Z)\in \mathbb D^{2,\kappa}_H\times\mathbb H^{2,\kappa}_H$ is a Skorokhod--solution to the {\rm 2RBSDE} if

\begin{itemize}
\item[$\bullet$] $Y_T=\xi$, and $Y_t\geq L_t$, $t\in[0,T]$, $\mathcal P_H^\kappa-q.s$.
\item[$\bullet$] $\forall \mathbb P \in \mathcal P_H^\kappa$, the process $K^{\mathbb P}$ defined below has non--decreasing paths $\mathbb P-a.s.$
\begin{equation}
K_t^{\mathbb P}:=Y_0-Y_t - \int_0^t\widehat{F}_s(Y_s,Z_s)ds+\int_0^tZ_sdB_s, \text{ } 0\leq t\leq T, \text{  } \mathbb P-a.s.
\label{2BSDE.kref2}
\end{equation}

\item[$\bullet$] The minimality condition \eqref{eq:sko} holds
\end{itemize}
\end{Definition}

In more mundane terms, this condition is saying that if there is a probability measure $\P$ such that the supremum in the representation formula \eqref{eq:rep} is attained, then on the support of $\P$, the classical Skorokhod condition is satisfied by the solution of the 2{\rm RBSDE}. 

\vspace{0.5em}
Let us now argue how \eqref{eq:sko} can be used instead of \eqref{eq:new} to recover wellposedness, and that both conditions actually lead to the exact same solution. Notice however that the method of proof here requires the following  condition on $L$, which basically asks that the variations of $L$ are not too "extreme".
\begin{Assumption}\label{assum}
We have for any $m\in\mathbb N$, for any sequence $\{(t_i^n)_{1\leq i\leq n},\; n\geq 0\}$ of partitions of $[0,T]$ $($allowing for stopping times$)$ whose mesh goes to $0$ as $n$ goes to $+\infty$, and for any $\eps>0$
$$\underset{n\rightarrow+\infty}{\lim}\; \underset{\P\in\Pc^\kappa_H}{\sup}\P\bigg[\sum_{i=0}^{n-1}{\bf 1}_{\big\{|L_{t_{i+1}^{n-}}-L_{t_i^{n-}}|\geq\eps\big\}}\geq n-m\bigg]=0.$$
\end{Assumption}
The above assumption puts somehow restrictions on the oscillations or the variations of $L$. Before pursuing, let us give the following two sufficient conditions.

\begin{Lemma}\label{lemma}
Either of the following two conditions imply Assumption \ref{assum}

\vspace{0.5em}
$(i)$ For any non--decreasing sequence of stopping times $(\rho_n)_{n\geq 0}$ converging to $T$ and for any $\eps>0$, we have
$$ \underset{n\rightarrow+\infty}{\lim}\; \underset{\P\in\Pc^\kappa_H}{\sup}\P\big[\big|L_{\rho_{n+1}^-}-L_{\rho_n^-}\big|\geq \eps\big]=0.$$ 

$(ii)$ Let $\Pi_{[0,T]}$ be the set of all partitions of $[0,T]$ $($which allow for stopping times$)$. We have for some $p\geq 1$
$$\ell:=\underset{(\rho_i)_i\in\Pi_{[0,T]}}{\sup}\; \underset{\P\in\Pc_H^\kappa}{\sup}\mathbb E^\P\bigg[\sum_{i=0}^{n-1}\abs{L_{\rho_{i+1}^-}-L_{\rho_i^-}}^p\bigg]<+\infty.$$
\end{Lemma}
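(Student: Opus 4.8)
The plan is to show that each of the two stated conditions implies the content of Assumption \ref{assum}, namely that the probability of having fewer than $m$ ``small'' increments of $L$ (in the sense that all but at most $m$ increments exceed $\eps$ in absolute value) tends to zero uniformly in $\P\in\Pc^\kappa_H$ as the mesh shrinks. The core difficulty is that the partition points are allowed to be stopping times and that the supremum is taken over the whole family $\Pc^\kappa_H$, so every estimate must be uniform in $\P$; ordinary ``diameter of the partition'' arguments have to be replaced by ones that survive this double generality.

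\medskip
\textbf{Part $(ii)$.} I would begin with the $L^p$-bound, which is the softer of the two. Fix a partition $(t_i^n)_i$ and $\eps>0$. On the event in Assumption \ref{assum} there are at least $n-m$ indices $i$ for which $|L_{t_{i+1}^{n-}}-L_{t_i^{n-}}|\geq\eps$, hence
\[
\sum_{i=0}^{n-1}\abs{L_{t_{i+1}^{n-}}-L_{t_i^{n-}}}^p\geq (n-m)\eps^p
\]
on that event. Taking $\P$-expectation and using a Markov/Chebyshev inequality gives
\[
\P\bigg[\sum_{i=0}^{n-1}{\bf 1}_{\big\{|L_{t_{i+1}^{n-}}-L_{t_i^{n-}}|\geq\eps\big\}}\geq n-m\bigg]\leq \frac{1}{(n-m)\eps^p}\,\mathbb E^\P\bigg[\sum_{i=0}^{n-1}\abs{L_{t_{i+1}^{n-}}-L_{t_i^{n-}}}^p\bigg]\leq \frac{\ell}{(n-m)\eps^p},
\]
where $\ell$ is the finite supremum from condition $(ii)$. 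Crucially the right-hand side is independent of $\P$, so taking $\sup_{\P\in\Pc^\kappa_H}$ and then letting $n\to+\infty$ (with $m$, $\eps$ fixed) sends it to $0$. Note this argument needs no control on the mesh at all, only $\ell<+\infty$.

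\medskip
\textbf{Part $(i)$.} This is the harder implication, and I expect it to be the main obstacle, because condition $(i)$ controls only the increments of $L$ along a \emph{single} sequence of stopping times converging to $T$, not along an arbitrary refining partition. The idea is to argue by contradiction: if Assumption \ref{assum} failed, there would be $m,\eps>0$, a sequence of partitions whose mesh tends to $0$, and measures $\P_n\in\Pc^\kappa_H$ with
\[
\P_n\bigg[\sum_{i}{\bf 1}_{\big\{|L_{t_{i+1}^{n-}}-L_{t_i^{n-}}|\geq\eps\big\}}\geq n-m\bigg]\geq\delta>0.
\]
On such an event, all but at most $m$ of the consecutive increments are $\geq\eps$; since the partition is fine, this forces many large jumps of $L$ to be packed into a shrinking time-scale, which one can exploit to extract a non-decreasing sequence of stopping times $(\rho_n)$ converging to $T$ along which the increments fail to become small with probability bounded below uniformly, contradicting $(i)$. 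The delicate points are (a) converting a \emph{count} of large increments into the existence of a \emph{specific} stopping-time sequence realising a large increment with uniform probability, and (b) maintaining uniformity in $\P$ throughout, which means the extraction of $(\rho_n)$ and the lower bound $\delta$ must be carried out without passing to a $\P$-dependent subsequence. I would handle (a) by selecting, on the bad event, the ``last'' (or a suitably chosen) pair of adjacent partition points across which the increment exceeds $\eps$, packaging these choices into adapted stopping times, and then invoking $(i)$ to drive the probability of a persistent $\eps$-sized increment to zero as the mesh vanishes, yielding the desired contradiction.
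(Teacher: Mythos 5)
Your treatment of condition $(ii)$ is correct and is exactly the paper's argument: on the event of Assumption \ref{assum} the sum of $p$-th powers is at least $(n-m)\eps^p$, Markov's inequality gives the bound $\ell/((n-m)\eps^p)$ uniformly in $\P$, and one lets $n\to+\infty$. Nothing to add there.

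For condition $(i)$, however, there is a genuine gap: you correctly sense that a naive union bound over all $n$ increments fails, but the contradiction-and-extraction argument you sketch is left with its two ``delicate points'' unresolved, and the second of them is a real obstruction --- selecting ``the last pair of adjacent partition points across which the increment exceeds $\eps$'' is an anticipative choice and does not produce stopping times, so you cannot feed it into condition $(i)$ as stated. The idea you are missing is an elementary pigeonhole observation that makes the implication a two-line affair and avoids any random index selection. On the event $\big\{\sum_{i=0}^{n-1}{\bf 1}_{\{|L_{t_{i+1}^{n-}}-L_{t_i^{n-}}|\geq\eps\}}\geq n-m\big\}$ at most $m$ of the $n$ indicators vanish, so among the \emph{last} $m+1$ increments (those with deterministic indices $i\in\{n-m,\dots,n\}$, counted from the end of the partition) at least one must exceed $\eps$. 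A union bound over these $m+1$ \emph{fixed} indices gives
\begin{equation*}
\P\bigg[\sum_{i=0}^{n-1}{\bf 1}_{\big\{|L_{t_{i+1}^{n-}}-L_{t_i^{n-}}|\geq\eps\big\}}\geq n-m\bigg]\leq \sum_{i=n-m}^n\P\Big[|L_{t_{i+1}^{n-}}-L_{t_i^{n-}}|\geq\eps\Big],
\end{equation*}
a sum of only $m+1$ terms, each involving an increment of $L$ between partition points that converge to $T$ as the mesh vanishes. Condition $(i)$ then sends each term to $0$ uniformly in $\P$, and the finiteness of the number of terms ($m$ being fixed before $n\to+\infty$) concludes. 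This is precisely why the assumption is phrased with the threshold $n-m$ rather than $n$: it is engineered so that only a bounded number of increments, all located at the tail of the partition, need to be controlled.
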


\begin{proof}
For $(i)$, It actually suffices to notice that for any $\P\in\Pc_H^\kappa$, for any $m\leq n$ and for any $\eps>0$
$$\P\bigg[\sum_{i=0}^{n-1}{\bf 1}_{\big\{|L_{t_{i+1}^{n-}}-L_{t_i^{n-}}|\geq\eps\big\}}\geq n-m\bigg]\leq \sum_{i=n-m}^n\P\Big[|L_{t_{i+1}^{n-}}-L_{t_i^{n-}}|\geq\eps\Big].$$
As for $(ii)$, a simple application of Markov inequality provides for any $p\geq 1$
\begin{align*}
\P\bigg[\sum_{i=0}^{n-1}{\bf 1}_{\big\{|L_{t_{i+1}^{n-}}-L_{t_i^{n-}}|\geq\eps\big\}}\geq n-m\bigg]&\leq \P\bigg[\sum_{i=0}^{n-1}|L_{t_{i+1}^{n-}}-L_{t_i^{n-}}|^p\geq (n-m)\eps^p\bigg]\\
&\leq \frac{1}{\eps^p(n-m)}\E^\P\left[\sum_{i=0}^{n-1}|L_{t_{i+1}^{n-}}-L_{t_i^{n-}}|^p\right]\leq \frac{\ell}{\eps^p(n-m)}.
\end{align*}
\end{proof}

\begin{Remark}
Obviously,  conditions in Lemma \ref{lemma}, and thus Assumption  \ref{assum} are satisfied  for $L$  being  a semi--martingale of the form
$$L_t=L_0+\int_0^tU_sds+\int_0^tV_sdB_s+C_t,\text{ }\mathcal P_H^\kappa-q.s.$$
where $C$ is c\`adl\`ag process of integrable variation such that the measure $dC_t$ is singular with respect to the Lebesgue measure $dt$ and which admits the following decomposition
$$C_t=C_t^+-C_t^-,$$
where $C^+$ and $C^-$ are non--decreasing processes. Besides, $U$ and $V$ are respectively $\mathbb R-$ and $\mathbb R^d-$valued, and $\mathbb F-$progressively measurable processes such that for some  $ p \geq 1$, 
$$  \underset{\P\in\Pc^\kappa_H}{\sup} \mathbb E^\P\bigg[\sup_{0 \leq  t \leq T}  \abs{U_t}^p + \Big(\int_0^T \abs{V_t}^2dt \Big)^{p/2}  + (C_T^+)^p +(C_T^-)^p \,   \bigg]<  +\infty.$$

\end{Remark}

\begin{Remark}
Notice that in the proof of Theorem \ref{th:main}, we actually only use Assumption \ref{assum} for a specific partition of $[0,T]$ defined in terms of successive crossings of $Y-L$. We therefore could have formulated Assumption \ref{assum} in terms of this partition only. We choose not to do so in order to make the assumption more natural.
\end{Remark}
The main result is now as follows, and its proof borrows a lot from the seminal paper of Ekren, Touzi and Zhang \cite{ekren2014optimal}.

\begin{Theorem}\label{th:main}
Let Assumption \ref{assum} hold, as well as the necessary assumptions for wellposedness in {\rm \cite{matoussi2013second}}. Then there is a unique Skorokhod--solution to the {\rm 2RBSDE} which coincides with the unique solution to the {\rm 2RBSDE}.
\end{Theorem}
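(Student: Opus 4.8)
The plan is to prove the theorem by showing that, once the first two bullet points of Definitions \ref{def1} and \ref{def2} are imposed (they are identical in both), the Skorokhod condition \eqref{eq:sko} holds if and only if the representation \eqref{eq:rep} holds. Granting this equivalence, Theorem \ref{uniqueref} tells us that \eqref{eq:rep} pins down a unique pair $(Y,Z)$, so the unique Skorokhod--solution must coincide with the unique solution of Definition \ref{def1}. Throughout I would lean on two structural facts that do not depend on the minimality condition chosen: first, the inequality $Y_t\ge\esup_{\P'}y_t^{\P'}$ and the pointwise bounds $Y_{s^-}\ge y^{\P'}_{s^-}\ge L_{s^-}$, established exactly as in step (i) of Theorem \ref{uniqueref}; and second, the representation identity \eqref{eq} (equivalently \eqref{brubru}), $Y_t-y_t^{\P'}=\mathbb E_t^{\P'}[\int_t^T M_s^{t,\P'}\,d(K^{\P'}_s-k^{\P'}_s)]$. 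I would record at the outset that both conditional expectations occurring in \eqref{eq:new} and \eqref{eq:sko} are nonnegative --- the one in \eqref{eq:new} equals $\delta Y_t=Y_t-y_t^{\P'}\ge 0$ by \eqref{eq}, and the one in \eqref{eq:sko} because $Y_{s^-}-L_{s^-}\ge 0$ while $K^{\P'}$ is non-decreasing --- so both essential infima are automatically $\ge 0$ and the content of each condition is the existence of a near-optimal sequence of measures driving the expectation to zero.

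For the more direct implication \eqref{eq:rep} $\Rightarrow$ \eqref{eq:sko}, I would fix $\P$ and $t$, choose a sequence $(\P'_n)$ near-optimal for the representation so that $\delta Y:=Y-y^{\P'_n}$ is small, and estimate the Skorokhod defect by splitting $Y_{s^-}-L_{s^-}=\delta Y_{s^-}+(y^{\P'_n}_{s^-}-L_{s^-})$. The contribution of the second summand integrated against $dk^{\P'_n}$ vanishes, since $y^{\P'_n}$ solves the classical reflected equation and hence $\int_t^T(y^{\P'_n}_{s^-}-L_{s^-})\,dk^{\P'_n}_s=0$; on the support of $dk^{\P'_n}$ this also rewrites $\int(Y_{s^-}-L_{s^-})\,dk^{\P'_n}_s=\int\delta Y_{s^-}\,dk^{\P'_n}_s$. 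The remaining terms are integrals against the bounded-variation but non-monotone process $\delta K^{\P'_n}=K^{\P'_n}-k^{\P'_n}$, which I would control through \eqref{eq} together with the smallness of $\delta Y$.

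The hard direction is \eqref{eq:sko} $\Rightarrow$ \eqref{eq:rep}, and here I would follow the crossing argument of Ekren, Touzi and Zhang \cite{ekren2014optimal} and invoke Assumption \ref{assum}. Fix $\P$, $t$, $\eps>0$, and a near-optimal sequence $(\P'_n)$ for the Skorokhod defect. On the set $\{Y_{s^-}-L_{s^-}>\eps\}$ one has $\eps\,dK^{\P'_n}_s\le(Y_{s^-}-L_{s^-})\,dK^{\P'_n}_s$, so \eqref{eq:sko} forces $K^{\P'_n}$ to increase only negligibly away from the times where $Y$ is within $\eps$ of the obstacle. On each maximal interval where $Y-L>\eps$ the solution $Y$ therefore almost satisfies the non-reflected equation, and comparing it with $y^{\P'_n}$ stopped at the next time $Y$ returns within $\eps$ of $L$ --- using that $y^{\P'_n}$ dominates the value of that stopping strategy --- yields a one-sided estimate $Y_t\le\esup_{\P'}y_t^{\P'}+C\eps+\mathcal R_n(\eps)$. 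The relevant partition is exactly the one formed by the successive crossing times of $Y-L$ across the levels $\eps$ and $2\eps$ mentioned in the Remark after Lemma \ref{lemma}, and the residual $\mathcal R_n(\eps)$ collects the overshoots that $Y-L$ incurs at those crossing times because of jumps of $L$.

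The main obstacle is precisely this error control. Because $K^{\P'}-k^{\P'}$ is genuinely non-monotone, as Example 1.1 shows, one cannot compare the two integral conditions termwise, and the excursion comparison leaves a residual governed by the oscillation of the obstacle at the crossing times. Bounding the total number of crossings together with the summed oscillation, uniformly over $\P\in\Pc^\kappa_H$, is exactly what Assumption \ref{assum} delivers; once $\mathcal R_n(\eps)$ is shown to vanish as $\eps\to0$ we obtain $Y_t\le\esup_{\P'}y_t^{\P'}$, hence \eqref{eq:rep}. Combining this equivalence with Theorem \ref{uniqueref} then gives simultaneously the uniqueness of the Skorokhod--solution and its coincidence with the solution of Definition \ref{def1}.
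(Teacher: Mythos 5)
Your overall frame --- show that, given the first two bullet points common to Definitions \ref{def1} and \ref{def2}, the Skorokhod condition \eqref{eq:sko} and the representation \eqref{eq:rep} select the same object, then invoke Theorem \ref{uniqueref} --- matches the paper's strategy. But you have assigned the difficulty to the wrong direction, and the direction you dismiss in a few lines is precisely where the substance (and Assumption \ref{assum}) lives. In the paper, the implication \eqref{eq:sko} $\Rightarrow$ $\widetilde Y=Y$ is the \emph{easy} step: one introduces the single stopping time $\tilde\tau_\eps:=\inf\{t:\widetilde Y_{t^-}-L_{t^-}\leq\eps\}$, notes that \eqref{eq:sko} forces $\underset{\P'}{{\rm essinf}^\P}\,\E^{\P'}[K^{\P'}_{\tilde\tau_\eps}-K^{\P'}_t]=0$ because $\widetilde Y_{s^-}-L_{s^-}>\eps$ before $\tilde\tau_\eps$, and compares $\widetilde Y$ on $[t,\tilde\tau_\eps]$ with the non-reflected BSDE with terminal value $L_{\tilde\tau_\eps}$, which is dominated by $y^{\P'}$. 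No crossing partition and no Assumption \ref{assum} are needed there; your plan to run the full Ekren--Touzi--Zhang excursion machinery for this step is an unnecessary detour (though it would presumably also work).

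The genuine gap is in your treatment of \eqref{eq:rep} $\Rightarrow$ \eqref{eq:sko}, i.e.\ the proof that the constructed solution satisfies the Skorokhod condition. Your splitting $Y_{s^-}-L_{s^-}=\delta Y_{s^-}+(y^{\P'_n}_{s^-}-L_{s^-})$ does not close for two reasons. First, the integrator in \eqref{eq:sko} is $dK^{\P'_n}$, not $dk^{\P'_n}$, so the classical Skorokhod condition for $(y^{\P'_n},k^{\P'_n})$ does not annihilate the second summand; after further decomposing $dK^{\P'_n}=d\delta K^{\P'_n}+dk^{\P'_n}$ you are left with $\int_t^T\delta Y_{s^-}\,dk^{\P'_n}_s$ and $\int_t^T(Y_{s^-}-L_{s^-})\,d\delta K^{\P'_n}_s$. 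Second, near-optimality of $\P'_n$ in \eqref{eq:rep} only makes $\delta Y_t$ small at the single conditioning time $t$; via \eqref{eq} this controls the \emph{signed} quantity $\E_t^{\P'_n}[\int_t^T M^{t,\P'_n}_s\,d\delta K^{\P'_n}_s]$, but neither $\sup_{s\in[t,T]}\delta Y_s$ nor the total variation of the non-monotone process $\delta K^{\P'_n}$, so neither residual term is controlled. This is exactly why the paper's Step 2 cannot work with one near-optimal measure on $[t,T]$: it must apply the dynamic programming principle on each excursion interval $[\tau_{2n},\tau_{2n+1}]$ where $Y-L>\eps$, use the optimal-stopping representation there to produce a measure under which $\E[K^{\P'}_{\tau_{2n+1}}-K^{\P'}_{\tau_{2n}}]$ is small (the estimate \eqref{eq:stop}, obtained through H\"older bounds on the linearization martingale), paste these measures along the excursions, and finally invoke Assumption \ref{assum} to show $\P[\tau_{2n}<T]\to0$, i.e.\ that the crossings do not accumulate in probability. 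Without this excursion-by-excursion construction your argument does not establish that $K^{\P'_n}$ charges only a neighbourhood of the contact set, and the proof of the existence half of the theorem is incomplete.
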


\begin{proof}
We now argue in two steps.

\vspace{0.5em}
\hspace{2em}{\bf Step $1$: uniqueness}

\vspace{0.5em}
This is the easiest part. Assume that there exists a Skorokhod--solution $(\widetilde Y,\widetilde Z)$. We will argue that $\widetilde Y=Y$, which implies immediately that $\widetilde Z=Z$, since $Z$ is uniquely defined by the quadratic co--variation between $Y$ and $B$.

\vspace{0.5em}
Fix first some $\P\in\Pc_H^\kappa$. By definition, $\widetilde Y$ is a super--solution under $\P$ to the standard BSDE with terminal condition $\xi$, generator $\widehat F$. Since it is also always above $L$, and since solutions to reflected BSDEs are also the minimal super--solutions of the associated BSDEs, we deduce that necessarily we have $\widetilde Y\geq y^\P,\; \P-a.s.$, which implies by arbitrariness of $\P$ and by \eqref{eq:rep} that
$$\widetilde Y\geq Y.$$
For the converse inequality, fix some $\eps>0$ and some $\P\in\Pc_H^\kappa$, and define the following stopping time
$$\tilde\tau_\eps
:=\inf\{t\geq 0,\; \widetilde Y_{t^-}-L_{t^-}\leq \eps\}\wedge T.$$
The Skorokhod condition implies that
$$0=\underset{\mathbb P'\in\Pc^\kappa_H(t^+,\P)}{{\rm essinf}^\P}\; \mathbb E^{\P'}_t\bigg[\int_t^T\big(\widetilde Y_{s^-}-L_{s^-}\big)dK_s^{\P'}\bigg]\geq \eps \underset{\mathbb P'\in\Pc^\kappa_H(t^+,\P)}{{\rm essinf}^\P}\; \mathbb E^{\P'}_t\big[K_{\tilde\tau_\eps}^{\P'}-K_t^{\P'}\big].$$
Next, let $(\mathcal Y^{\P'}(\tilde\tau_\eps, L_{\tilde\tau_\eps}),\mathcal Z^{\P'}(\tilde\tau_\eps, L_{\tilde\tau_\eps}))$ be the solution, under $\P'$ and on $[0,\tilde\tau_\eps]$, of the BSDE with generator $\widehat F$ and terminal condition $L_{\tilde\tau\eps}$. We have $\P'-a.s.$
\begin{align*}
\widetilde Y_t&=\widetilde Y_{\tilde\tau_\eps} + \int_t^{\tilde\tau_\eps}\widehat{F}_s\big(\widetilde Y_s,\widetilde Z_s\big)ds-\int_t^{\tilde\tau_\eps}Z_sdB_s+\int_t^{\tilde\tau\eps}dK^{\P'}_s,\; 0\leq t\leq \tilde\tau_\eps,\\
{\color{black}\Yc^{\P'}_t(\tilde\tau_\eps, L_{\tilde\tau_\eps})}&=L_{\tilde\tau_\eps}\mathbf{1}_{\{\tilde\tau\eps<T\}}+\xi\mathbf{1}_{\{\tilde\tau_\eps=T\}} + \int_t^{\tilde\tau_\eps}\widehat{F}_s\big(\Yc^{\P'}_s(\tilde\tau_\eps, L_{\tilde\tau_\eps}),\Zc^{\P'}_s(\tilde\tau_\eps, L_{\tilde\tau_\eps})\big)ds-\int_t^{\tilde\tau_\eps}\Zc^{\P'}_s(\tilde\tau_\eps, L_{\tilde\tau_\eps})dB_s,\; 0\leq t\leq \tilde\tau_\eps.
\end{align*}
We can now use classical linearisation arguments as above to show the result. First, we define $M^{t,\mathbb P^{'}}_s$ in the same way as in (\ref{eqM}). Notice that as in \cite[Equation (4.12)]{soner2012wellposedness}, for any $p\geq 1$, there exists $C_p>0$ such that
$$\mathbb E^{\P^{'}}_t\left[\underset{t\leq s\leq T}{\sup} (M^{t,\mathbb P^{'}}_{s})^p\right]\leq C_p, \; \P^{'}-a.s.$$
Then, by It\=o's formula, we obtain
\begin{align*}
\widetilde Y_t-\Yc^{\P'}_t(\tilde\tau_\eps, L_{\tilde\tau_\eps})&= \mathbb E^{\P'}_t\bigg[M^{t,\mathbb P^{'}}_{\tilde\tau_\eps}\Big(\widetilde Y_{\tilde\tau_\eps}- \Yc^{\P'}_{\tilde\tau_\eps}(\tilde\tau_\eps, L_{\tilde\tau_\eps})\Big) +\int_{t}^{{\tilde\tau_\eps}}M^{t,\mathbb P^{'}}_s dK^{\P'}_s\bigg]\\
&\leq \mathbb E^{\P'}_t\bigg[\underset{t\leq s\leq T}{\sup} M^{t,\mathbb P^{'}}_{s}\left(\widetilde Y_{\tilde\tau_\eps}- \Yc^{\P'}_{\tilde\tau_\eps}(\tilde\tau_\eps, L_{\tilde\tau_\eps}) + K^{\P'}_{\tilde\tau_\eps}-K^{\P'}_t\right)\bigg]\\
&\leq
\eps \mathbb E^{\P'}_t\bigg[\underset{t\leq s\leq T}{\sup} M^{t,\mathbb P^{'}}_{s}\bigg] +\mathbb E^{\P'}_t\bigg[\underset{t\leq s\leq T}{\sup} M^{t,\mathbb P^{'}}_{s}\left(K^{\P'}_{\tilde\tau_\eps}-K^{\P'}_t\right)\bigg]\\
&\leq
\eps\mathbb E^{\P'}_t\left[\underset{t\leq s\leq T}{\sup} M^{t,\mathbb P^{'}}_{s}\right] +\left(\mathbb E^{\P'}_t\bigg[\underset{t\leq s\leq T}{\sup} \big(M^{t,\mathbb P^{'}}_{s}\big)^3\bigg]\right)^{1/3}
\left(\mathbb E^{\P'}_t \left[\left(K^{\P'}_{\tilde\tau_\eps}-K^{\P'}_t\right)^{3/2}\right]\right)^{2/3}\\
&\leq
\eps \mathbb E^{\P'}_t\left[\underset{t\leq s\leq T}{\sup} M^{t,\mathbb P^{'}}_{s}\right] +\left(\mathbb E^{\P'}_t\left[\underset{t\leq s\leq T}{\sup} \big(M^{t,\mathbb P^{'}}_{s}\big)^3\right]\right)^{1/3}
\left(\mathbb E^{\P'}_t \left[K^{\P'}_{\tilde\tau_\eps}-K^{\P'}_t\right]\mathbb E^{\P'}_t \left[\left(K^{\P'}_{\tilde\tau_\eps}-K^{\P'}_t\right)^{2}\right]\right)^{1/3}\\
&\leq
C_1\eps +(C_3)^{1/3}\left(\mathbb E^{\P'}_t\Big[K^{\P'}_{\tilde\tau_\eps}-K^{\P'}_t\Big]\right)^{1/3}\left(\mathbb E^{\P'}_t\Big[\left(K^{\P'}_{\tilde\tau_\eps}-K^{\P'}_t\right)^2\Big]\right)^{1/3}.
\end{align*}
Define for simplicity
$$C_{t}^{\P}:= \underset{\mathbb P'\in\Pc^\kappa_H(t^+,\P)}{{\rm esssup}^\P}\;\mathbb E^{\P'}_t\Big[\left(K^{\P'}_{\tilde\tau_\eps}-K^{\P'}_t\right)^2\Big].$$ 
Taking the essential infimum on both sides of the previous inequality, we deduce
\begin{align*}
\widetilde Y_t-\underset{\mathbb P'\in\Pc^\kappa_H(t^+,\P)}{{\rm essup}^\P}\Yc^{\P'}_t(\tilde\tau_\eps, L_{\tilde\tau_\eps})&\leq \eps C_1+ (C_3)^{1/3}\big(C_t^\P\big)^{1/3}\bigg( \underset{\mathbb P'\in\Pc^\kappa_H(t^+,\P)}{{\rm essinf}^\P}\;\mathbb E^{\P'}_t \big[K^{\P'}_{\tilde\tau_\eps}-K^{\P'}_t\big]\bigg)^{1/3}.
\end{align*}
Now, as in \cite[Proof of Theorem 4.3 (iii)]{soner2012wellposedness}, since the set of probabilities is upward directed, we obtain that
$$\mathbb E^{\P}\left[\underset{\mathbb P'\in\Pc^\kappa_H(t^+,\P)}{{\rm esssup}^\P}\;\mathbb E^{\P'}_t\Big[\left(K^{\P'}_{\tilde\tau_\eps}-K^{\P'}_t\right)^2\Big]\right]
\leq 
\underset{\mathbb P'\in\Pc^\kappa_H(t^+,\P)}{{\rm sup}}\;\mathbb E^{\P'}\Big[\left(K^{\P'}_{T}-K^{\P'}_t\right)^2\Big]<\infty. 
$$
As such, we can use the Skorokhod condition to deduce now
$$\widetilde Y_t-\underset{\mathbb P'\in\Pc^\kappa_H(t^+,\P)}{{\rm essup}^\P}\Yc^{\P'}_t(\tilde\tau_\eps, L_{\tilde\tau_\eps})\leq \eps C_1.$$
By the classical comparison theorem, we have that $\Yc^{\P'}_t(\tilde\tau_\eps, L_{\tilde\tau_\eps}) \leq {\color{black}y^{\P'}_t}$, so that we deduce
$$\widetilde Y_t\leq \underset{\mathbb P'\in\Pc^\kappa_H(t^+,\P)}{{\rm essup}^\P}\;y^{\P'}_t +C_1\eps\leq Y_t+C_1\eps,$$
which implies the required result by arbitrariness of $\eps$.

\vspace{0.5em}
\hspace{2em}{\bf Step $2$: existence}

\vspace{0.5em}
The only thing that needs to be done here is to prove that the solution we constructed in the sense of Definition \ref{def1} is also a Skorokhod--solution. In other words, we simply have to prove that $Y$ satisfies the Skorokhod minimality condition
\begin{equation*}
\underset{\mathbb P'\in\Pc^\kappa_H(t^+,\P)}{{\rm essinf}^\P}\; \mathbb E_t^{\P'}\bigg[\int_t^T\big(Y_{s^-}-L_{s^-}\big)dK_s^{\P'}\bigg]=0,\; t\in[0,T],\; \P-a.s.,\; \forall\P\in\Pc^\kappa_H.
\end{equation*}

Without loss of generality, we prove that this holds for $t=0$, which is equivalent to proving that
\begin{equation*}
\underset{\mathbb P\in\Pc^\kappa_H}{{\rm inf}}\; \mathbb E^{\P}\bigg[\int_0^T\big(Y_{s^-}-L_{s^-}\big)dK_s^{\P}\bigg]=0.
\end{equation*}
Let us start by fixing some $\eps>0$, and define the following sequence of stopping times $(\tau_n)_{n\geq 1}$ by
$$\tau_0:=0,\; \tau_1:=\inf\{t\geq 0,\; Y_{t^-}-L_{t^-}\leq \eps\}\wedge T,$$
$$ \tau_{2n}:=\inf\{t>\tau_{2n-1},\; Y_{t^-}-L_{t^-}\geq 2\eps\}\wedge T,\; \tau_{2n+1}:=\inf\{t>\tau_{2n},\; Y_{t^-}-L_{t^-}\leq \eps\}\wedge T,\; n\geq 1.$$
We start by proving that for any $n\geq 1$ and any $\P\in\Pc_H^\kappa$
\begin{equation}\label{eq:stop}
\underset{\P\in\Pc_H^\kappa}{\inf}\E^\P\big[K^\P_{\tau_1}\big]=\underset{\P'\in\Pc_H^\kappa(\tau_{2n}^+,\P)}{{\rm essinf}^\P}\E^{\P'}_{\tau_{2n}}\big[K^{\P'}_{\tau_{2n+1}}-K_{\tau_{2n}}^{\P'}\big]=0.
\end{equation}
By the dynamic programming principle (see \cite[Proposition 4.1]{matoussi2013second}), we know that for any $n\geq 0$, and using the link between reflected BSDEs and optimal stopping problems, where for any $0\leq s\leq t\leq T$, $\mathcal T_{s,t}$ denotes the set of stopping times taking values in $[s,t]$
$$Y_{\tau_{2n}}=\underset{\P'\in\Pc_H^\kappa(\tau_{2n}^+,\P)}{{\rm essup}^\P}\; \underset{\tau\in\mathcal T_{\tau_{2n},\tau_{2n+1}}}{{\rm essup}^\P}\Yc^{\P'}_{\tau_{2n}}\big(\tau,L_\tau{\bf 1}_{\{\tau<\tau_{2n+1}\}}+Y_{\tau_{2n+1}}{\bf 1}_{\{\tau=\tau_{2n+1}\}}\big).$$
We claim that the family 
\[
\Big\{\Yc^{\P'}_{\tau_{2n}}\big(\tau,L_\tau{\bf 1}_{\{\tau<\tau_{2n+1}\}}+Y_{\tau_{2n+1}}{\bf 1}_{\{\tau=\tau_{2n+1}\}}\big):(\P',\tau)\in  \Pc_H^\kappa(\tau_{2n}^+,\P)\times \mathcal T_{\tau_{2n},\tau_{2n+1}}\Big\},
\]
is upward directed. Indeed take for $i\in\{1,2\}$, $(\P^{i},\tau^i)\in\Pc_H^\kappa(\tau_{2n}^+,\P)\times \mathcal T_{\tau_{2n},\tau_{2n+1}}$. We need to find a pair $(\P',\tau)\in\Pc_H^\kappa(\tau_{2n}^+,\P)\times \mathcal T_{\tau_{2n},\tau_{2n+1}}$ such that
\begin{align}\label{eq:upward}
\notag \Yc^{\P'}_{\tau_{2n}}\big(\tau,L_\tau{\bf 1}_{\{\tau<\tau_{2n+1}\}}+Y_{\tau_{2n+1}}{\bf 1}_{\{\tau=\tau_{2n+1}\}}\big)&=\max\Big\{\Yc^{\P^1}_{\tau_{2n}}\big(\tau^1,L_{\tau^1}{\bf 1}_{\{\tau^1<\tau_{2n+1}\}}+Y_{\tau_{2n+1}}{\bf 1}_{\{\tau^1=\tau_{2n+1}\}}\big),\\
&\qquad\qquad \Yc^{\P^2}_{\tau_{2n}}\big(\tau^2,L_{\tau^2}{\bf 1}_{\{\tau^2<\tau_{2n+1}\}}+Y_{\tau_{2n+1}}{\bf 1}_{\{\tau^2=\tau_{2n+1}\}}\big)\Big\}.
\end{align}
Define the following sets in $\Fc_{\tau_{2n}^+}^\P$
\[
A:=\Big\{\Yc^{\P^1}_{\tau_{2n}}\big(\tau^1,L_{\tau^1}{\bf 1}_{\{\tau^1<\tau_{2n+1}\}}+Y_{\tau_{2n+1}}{\bf 1}_{\{\tau^1=\tau_{2n+1}\}}\big)\geq \Yc^{\P^2}_{\tau_{2n}}\big(\tau^2,L_{\tau^2}{\bf 1}_{\{\tau^2<\tau_{2n+1}\}}+Y_{\tau_{2n+1}}{\bf 1}_{\{\tau^2=\tau_{2n+1}\}}\big)\Big\},\; B:=\Omega\setminus A.
\]
Define then $\tau:=\tau^1\mathbf{1}_{A}+\tau^2\mathbf{1}_{B}$, as well as $\P^\prime[C]:=\P^1[A\cap C]+\P^2[B\cap C]$, for any $C\in\Fc_T$. It can be checked directly that $\tau\in  \mathcal T_{\tau_{2n},\tau_{2n+1}}$, and that $\P\in\Pc_H^\kappa$ (see similar arguments in \cite[Proof of Theorem 4.2, step $(ii)$]{possamai2018stochastic}), and we have by definition that \eqref{eq:upward} holds for this choice of $(\P^\prime,\tau)$. Therefore, we know that the essential supremum in $Y_{\tau_{2n}}$ is attained along a subsequence, meaning that for any $\delta>0$, there exists some some $\P'_\delta\in\Pc_H^\kappa(\tau_{2n}^+,\P)$ and some $\tau^\delta\in\Tc_{\tau_{2n},\tau_{2n+1}}$ such that
$$Y_{\tau_{2n}}\leq \Yc_{\tau_{2n}}^{\P'_\delta}\big(\tau^\delta,L_{\tau^\delta}{\bf 1}_{\{\tau^\delta<\tau_{2n+1}\}}+Y_{\tau_{2n+1}}{\bf 1}_{\{\tau^\delta=\tau_{2n+1}\}}\big)+\delta.$$
Recall $\eta $ and $\lambda$ from \eqref{eqF}, and define then (notice that this process is slightly different from $M$ defined in \eqref{eqM})
\begin{align*}
\Mc_s^{t,\P'_\delta}:= \exp\bigg(\int_t^s(\lambda_u-\frac12|\eta_u|^2)\big(Y_u,\Yc_u^{\P'_\delta},Z_u,\Zc_u^{\P'_\delta},\widehat a_u)du-\int_t^s\eta_u(Y_u,\Yc_u^{\P'_\delta},Z_u,\Zc_u^{\P'_\delta},\widehat a_u)\cdot \widehat a_u^{-1/2}dB_u\bigg),
\end{align*}
where we denoted for simplicity
$$\Yc^{\P'_\delta}:=\Yc^{\P'_\delta}\big(\tau^\delta,L_{\tau^\delta}{\bf 1}_{\{\tau^\delta<\tau_{2n+1}\}}+Y_{\tau_{2n+1}}{\bf 1}_{\{\tau^\delta=\tau_{2n+1}\}}\big),\; \Zc^{\P'_\delta}:=\Zc^{\P'_\delta}\big(\tau^\delta,L_{\tau^\delta}{\bf 1}_{\{\tau^\delta<\tau_{2n+1}\}}+Y_{\tau_{2n+1}}{\bf 1}_{\{\tau^\delta=\tau_{2n+1}\}}\big).$$
Notice that for any $p\in\mathbb R$, the boundedness of $\lambda$ and $\eta$ imply that for some constant $C_p>0$
$$\underset{\P\in\Pc_H^\kappa}{\sup}\mathbb E^{\P}\bigg[\Big(\underset{t\leq s\leq T}{\sup}\Mc_s^{t,\P}\Big)^p+\Big(\underset{t\leq s\leq T}{\inf}\Mc_s^{t,\P}\Big)^p\bigg]\leq C_p.$$
Then, linearization arguments similar to the ones used before in this note imply that
$$Y_t-\Yc^{\P'_\delta}_t=\mathbb E^{\P'_\delta}_t\bigg[\Mc_{\tau^\delta}^{t,\P'_\delta}(Y_{\tau^\delta}-L_{\tau^\delta}){\bf 1}_{\{\tau^\delta<\tau_{2n+1}\}}+\int_t^{\tau^\delta}\Mc_{s}^{t,\P'_\delta}dK^{\P'_\delta}_s\bigg],\; {\color{black}\tau_{2n}\leq t\leq\tau_{2n+1}},\; \P-a.s.$$
By definition of the $(\tau_n)_{n\geq 0}$, we deduce that
$$\delta\geq Y_{\tau_{2n}}-\Yc^{\P'_\delta}_{\tau_{2n}}\geq \eps\mathbb E^{\P'_\delta}_{\tau_{2n}}\Big[\Mc_{\tau^\delta}^{\tau_{2n},\P'_\delta}{\bf 1}_{\{\tau^\delta<\tau_{2n+1}\}}\Big]+\mathbb E^{\P'_\delta}_{\tau_{2n}}\bigg[\underset{\tau_{2n}\leq s\leq \tau^\delta}{\inf}\Mc_s^{\tau_{2n},\P'_\delta}\big(K^{\P'_\delta}_{\tau^\delta}-K^{\P'_\delta}_{\tau_{2n}}\big)\bigg].$$
We then estimate that
\begin{align*}
\P'_\delta[\tau^\delta< \tau_{2n+1}]&=\E^{\P'_\delta}\Big[\big(\Mc_{\tau^\delta}^{\tau_{2n},\P'_\delta}\big)^{-\frac12}\big(\Mc_{\tau^\delta}^{\tau_{2n},\P'_\delta}\big)^{\frac12}{\bf 1}_{\{\tau^\delta< \tau_{2n+1}\}}\Big]\\
&\leq \bigg(\E^{\P'_\delta}\Big[\big(\Mc_{\tau^\delta}^{\tau_{2n},\P'_\delta}\big)^{-1}\Big]\E^{\P'_\delta}\Big[\big(\Mc_{\tau^\delta}^{\tau_{2n},\P'_\delta}\big)^{\frac12}{\bf 1}_{\{\tau^\delta< \tau_{2n+1}\}}\Big]\bigg)^{\frac12}\\
&\leq C_{-1}^{\frac12}\sqrt{\frac\delta\eps}.
\end{align*}
Recall as well that by Step $(iii)$ of the proof of {\color{black}\cite[Theorem 3.1]{matoussi2013second}} that for some $\bar C>0$
$$\underset{\P'\in\Pc_H^\kappa(\tau_{2n}^+,\P)}{{\rm essup}^\P}\E^{\P'}_{\tau_{2n}}\big[\big(K^{\P'}_{\tau_{2n+1}}-K_{\tau_{2n}}^{\P'}\big)^2\big]\leq \bar C.$$
Therefore, we have
\begin{align*}
&\E^{\P'_\delta}_{\tau_{2n}}\Big[K^{\P'_\delta}_{\tau_{2n+1}}-K_{\tau_{2n}}^{\P'_\delta}\Big]\\
&\leq \E^{\P'_\delta}_{\tau_{2n}}\Big[K^{\P'_\delta}_{\tau^\delta}-K_{\tau_{2n}}^{\P'_\delta}\Big]+ \E^{\P'_\delta}_{\tau_{2n}}\Big[\big(K^{\P'_\delta}_{\tau_{2n+1}}-K_{\tau_{2n}}^{\P'_\delta}\big){\bf 1}_{\{\tau^\delta<\tau_{2n+1}\}}\Big] \\
&= \E^{\P'_\delta}_{\tau_{2n}}\bigg[\Big(\underset{\tau_{2n}\leq s\leq \tau^\delta}{\inf}\Mc_s^{t,\P}\Big(K^{\P'_\delta}_{\tau^\delta}-K_{\tau_{2n}}^{\P'_\delta}\Big)\Big)^\frac13\Big(K^{\P'_\delta}_{\tau^\delta}-K_{\tau_{2n}}^{\P'_\delta}\Big)^{\frac23}\Big(\underset{\tau_{2n}\leq s\leq \tau^\delta}{\inf}\Mc_s^{t,\P}\Big)^{-\frac13}\bigg]\\
&\hspace{0.9em}+\E^{\P'_\delta}_{\tau_{2n}}\Big[\big(K^{\P'_\delta}_{\tau_{2n+1}}-K_{\tau_{2n}}^{\P'_\delta}\big){\bf 1}_{\{\tau^\delta<\tau_{2n+1}\}}\Big]\\
&\leq  \bigg(\E^{\P'_\delta}_{\tau_{2n}}\bigg[\underset{\tau_{2n}\leq s\leq \tau^\delta}{\inf}\Mc_s^{t,\P}\Big(K^{\P'_\delta}_{\tau^\delta}-K_{\tau_{2n}}^{\P'_\delta}\Big)\bigg]\E^{\P'_\delta}_{\tau_{2n}}\bigg[\Big(K^{\P'_\delta}_{\tau^\delta}-K_{\tau_{2n}}^{\P'_\delta}\Big)^2\bigg]\E^{\P'_\delta}_{\tau_{2n}}\bigg[\Big(\underset{\tau_{2n}\leq s\leq \tau^\delta}{\inf}\Mc_s^{t,\P}\Big)^{-1}\bigg]\bigg)^{\frac13}\\
&\hspace{0.9em}+ \bigg(\E^{\P'_\delta}_{\tau_{2n}}\bigg[\Big(K^{\P'_\delta}_{\tau^\delta}-K_{\tau_{2n}}^{\P'_\delta}\Big)^2\bigg]\mathbb P'_\delta\big[\tau^\delta<\tau_{2n+1}\big]\bigg)^{\frac12}\\
&\leq \big(\bar CC_{-1}\big)^{\frac13}\delta^{\frac13}+\bar C^\frac12C_{-1}^\frac14\bigg(\frac{\delta}{\eps}\bigg)^\frac14.
\end{align*}
This implies immediately that 
$$\underset{\P'\in\Pc_H^\kappa(\tau_{2n}^+,\P)}{{\rm essinf}^\P}\E^{\P'}_{\tau_{2n}}\big[K^{\P'}_{\tau_{2n+1}}-K_{\tau_{2n}}^{\P'}\big]\leq \big(\bar CC_{-1}\big)^{\frac13}\delta^{\frac13}+\bar C^\frac12C_{-1}^\frac14\bigg(\frac{\delta}{\eps}\bigg)^\frac14,$$
which proves the second equality in \eqref{eq:stop} by letting $\delta$ go to $0$.

\medskip
Now, in order to prove that $\underset{\P\in\Pc_H^\kappa}{\inf}\E^\P\big[K^\P_{\tau_1}\big]=0$, notice that we just obtained
\[
\underset{\P'\in\Pc_H^\kappa(0^+,\P)}{{\rm essinf}^\P}\E^{\P'}_{0}\big[K^{\P'}_{\tau_{1}}\big]=0.
\]
Taking expectations, and using the fact that since the family of measure sis upward directed, the essential infimum is attained along some sequence $(\P_n)_{n\in\N}\subset \Pc_H^\kappa(0^+,\P)$, we deduce by the monotone convergence theorem under $\P$, and the fact that all the $(\P_n)_{n\in\N}$ coincide with $\P$ on $\Fc_{0+}$
\begin{align*}
0=\E^\P\bigg[\underset{\P'\in\Pc_H^\kappa(0^+,\P)}{{\rm essinf}^\P}\E^{\P'}_{0}\big[K^{\P'}_{\tau_{1}}\big]\bigg]=\E^\P\bigg[\lim_{n\to\infty}\downarrow\E^{\P_n}_{0}\big[K^{\P_n}_{\tau_{1}}\big]\bigg]=\lim_{n\to\infty}\downarrow\E^\P\Big[\E^{\P_n}_{0}\big[K^{\P_n}_{\tau_{1}}\big]\Big]& = \lim_{n\to\infty}\downarrow\E^{\P_n}\big[K^{\P_n}_{\tau_{1}}\big]\\
&\geq \underset{\P\in\Pc_H^\kappa}{\inf}\E^\P\big[K^\P_{\tau_1}\big],
\end{align*}
which proves the desired equality.

\vspace{0.5em}
Therefore, for any $n\geq 0$, we can find some $\P_1\in\Pc_H^\kappa$, and some $\P_{n+1}\in\Pc_H^\kappa(\tau_{2n}^+,\P_n)$ such that for some $\tilde C>0$
$$\E^{\P_{n+1}}_{\tau_{2n}}\big[K^{\P_{n+1}}_{\tau_{2n+1}}-K_{\tau_{2n}}^{\P_{n+1}}\big]\leq \frac{\eps}{2^n}.$$
By definition, we have $Y_t-L_t\leq 2\eps$ for $t\in[\tau_{2n-1},\tau_{2n}]$, so that
\begin{align*}
\mathbb E^{\P_n}\bigg[\int_0^{\tau_{2n}}\big(Y_{s^-}-L_{s^-}\big)dK_s^{\P_n}\bigg]&= \sum_{i=0}^{n-1}\mathbb E^{\P_n}\bigg[\int_{\tau_{2i+1}}^{\tau_{2(i+1)}}\big(Y_{s^-}-L_{s^-}\big)dK_s^{\P_n}+\int_{\tau_{2i}}^{\tau_{2i+1}}\big(Y_{s^-}-L_{s^-}\big)dK_s^{\P_n}\bigg]\\
&=\sum_{i=0}^{n-1}\mathbb E^{\P_{i+1}}\bigg[\int_{\tau_{2i+1}}^{\tau_{2(i+1)}}\big(Y_{s^-}-L_{s^-}\big)dK_s^{\P_{i+1}}+\int_{\tau_{2i}}^{\tau_{2i+1}}\big(Y_{s^-}-L_{s^-}\big)dK_s^{\P_{i+1}}\bigg]\\
&\leq \sum_{i=0}^{n-1}2\eps\mathbb E^{\P_n}\Big[K_{\tau_{2(i+1)}}^{\P_{i+1}}-K_{\tau_{2i+1}}^{\P_{i+1}}\Big]+\frac{\eps}{2^i}\mathbb E^{\P_n}\bigg[\underset{\tau_{2i}\leq s\leq \tau_{2i+1}}{\sup}\big(Y_s-L_s\big)\bigg]\\
&\leq \tilde C\eps,
\end{align*}
where we used the {\it a priori} estimates satisfied by the solution of the 2{\rm RBSDE}, see \cite[Theorem 3.3]{matoussi2013second} and the definition of $\P_n$.

\vspace{0.5em}
Next, notice that $Y-L$ is right--continuous, and therefore uniformly continuous from the right (see \cite[Section 2.8]{applebaum2004levy}). Besides, by definition, we have for any $n\geq 0$, that on $\{\tau_{n+1}<T\}$
$$\abs{(Y_{\tau_{n+1}^-}-L_{\tau_{n+1}^-})-(Y_{\tau_{n}^-}-L_{\tau_{n}^-})}\geq \eps.$$
Therefore the $\tau_n$ cannot accumulate and for $n$ large enough we necessarily have $\tau_n=T$. We now assume that the $n$ we have chosen satisfies this property.

\vspace{0.5em}
Finally, fix some $m\leq n$. We have the following estimate for any $\P\in\Pc_H^\kappa$
\begin{align*}
\P[\tau_{2n}<T]&\leq \P\bigg[\bigcap_{i=0}^{n-1}\Big\{\big|Y_{\tau_{2(i+1)}^-}-Y_{\tau_{2i+1}^-}\big|+\big|L_{\tau_{2(i+1)}^-}-L_{\tau_{2i+1}^-}\big|\geq \eps\Big\}\bigg]\\
&\leq \P\bigg[\bigcap_{i=0}^{n-1}\bigg(\Big\{\big|Y_{\tau_{2(i+1)}^-}-Y_{\tau_{2i+1}^-}\big|\geq\frac\eps2\Big\}\bigcup\Big\{\big|L_{\tau_{2(i+1)}^-}-L_{\tau_{2i+1}^-}\big|\geq \frac\eps2\Big\}\bigg)\bigg]\\
&\leq \P\bigg[\bigg\{\Sum_{i=0}^{n-1}\big|Y_{\tau_{2(i+1)}^-}-Y_{\tau_{2i+1}^-}\big|^2\geq \frac{m\eps^2}{4}\bigg\}\bigcup\bigg\{\Sum_{i=0}^{n-1}{\bf 1}_{\big\{\big|L_{\tau_{2(i+1)}^-}-L_{\tau_{2i+1}^-}|\geq \eps/2\big\}}\geq (n-m)\bigg\}\bigg]\\
&\leq \frac{4}{m\eps^2}\E^\P\bigg[\Sum_{i=0}^{n-1}\big|Y_{\tau_{2(i+1)}^-}-Y_{\tau_{2i+1}^-}\big|^2\bigg]+\P\bigg[\sum_{i=0}^{n-1}{\bf 1}_{\big\{|L_{t_{i+1}^{n-}}-L_{t_i^{n-}}|\geq\eps/2\big\}}\geq n-m\bigg].
\end{align*}
Now notice that we have for some constant $C$ which may change value from line to line, by definition and using Doob's inequality as well as the elementary inequality $\sum_i a_i^2\leq(\sum_i |a_i|)^2$ and the estimates of \cite[Theorem 3.3]{matoussi2013second}
\begin{align*}
\E^\P\bigg[\Sum_{i=0}^{n-1}\big|Y_{\tau_{i+1}^-}-Y_{\tau_i^-}\big|^2\bigg]\leq C\mathbb E^\P\bigg[\int_0^T\big|\widehat F_s(Y_s,Z_s)\big|^2ds+\int_0^T\big|\widehat a_s^{1/2}Z_s\big|^2ds+\big(K_T^\P\big)^2\bigg]\leq C.
\end{align*}
Consequently, using Assumption \ref{assum}
\begin{align*}
\underset{\mathbb P\in\Pc^\kappa_H}{{\rm inf}}\; \mathbb E^{\P}\bigg[\int_0^T\big(Y_{s^-}-L_{s^-}\big)dK_s^{\P}\bigg]&\leq  \mathbb E^{\P_n}\bigg[\int_0^T\big(Y_{s^-}-L_{s^-}\big)dK_s^{\P_n}\bigg]\\
&\leq \tilde C\eps+\bigg(\mathbb E^{\P_n}\bigg[\underset{0\leq s\leq T}{\sup}\big(Y_s-L_s\big)^2\bigg]\mathbb E^{\P_n}\Big[\big(K_T^{\P_n}\big)^2\Big]\bigg)^\frac12\P^n[\tau_{2n}<T]\\
&\leq \tilde C\eps +\frac{4C}{m\eps^2}+\P\bigg[\sum_{i=0}^{n-1}{\bf 1}_{\big\{|L_{t_{i+1}^{n-}}-L_{t_i^{n-}}|\geq\eps/2\big\}}\geq n-m\bigg].
\end{align*}
It thus suffices to let $n$ go to $+\infty$ first, then $m$ to $+\infty$ and finally $\eps$ to $0$.
\end{proof}

\subsubsection{Comparison with the literature}
In the recent months, two independent studies of the so-called reflected $G-$BSDEs have appeared, the first by Li and Peng \cite{li2017reflected}, and the second in the PhD thesis of Soumana Hima \cite{soumana2017equations}. Both these papers obtain wellposedness, in the $G-$framework of Peng of solutions to reflected $G-$BSDEs with a lower obstacle. Unlike our first paper \cite{matoussi2013second}, they ensure uniqueness by using the Skorokhod minimality condition \eqref{eq:sko}. However, as shown by the result of the previous section, under Assumption \ref{assum}, both minimality conditions actually lead to the exact same solution. Let us now detail a bit more the other differences between the two different approaches.

\begin{itemize}
\item[$(i)$] First of all, concerning the assumptions made, the main difference is on the obstacle. In \cite[Theorems 5.1 and 5.2]{li2017reflected}, in addition to our own assumptions, it is assumed to either be bounded from above or that it is a semimartingale under every measure considered (see their Assumptions $(H4)$ and $(H4')$). Similarly, \cite{soumana2017equations} requires the obstacle to be a semimartingale (see the equation just after $(5.4)$ in \cite{soumana2017equations}). In our framework, if one is satisfied with Definition \ref{def1}, then we only require classical square integrability on $L$. If one also wants to recover the Skorokhod condition, then we need more in the form of Assumption \ref{assum}. In any case, this does not imply that $L$ has to be a semimartingale nor bounded from above, and as shown in Lemma \ref{lemma}, it would be enough for $L$ to have finite $p-$variation for some $p\geq 1$, which is obviously satisfied if $L$ is a semimartingale, making our assumption weaker in general.

\item[$(ii)$] Concerning the method of proof, both \cite{li2017reflected} and \cite{soumana2017equations} use the classical penalisation method introduced by \cite{el1997reflected} to prove existence, while uniqueness is obtained through {\it a priori} estimates. Our proof is more constructive and in the spirit of the original paper \cite{soner2012wellposedness}. We expect that the penalisation approach should be applicable in our setting as well, but we leave this interesting question to future research. 

\item[$(iii)$] Maybe more important than the above point, one has to keep in mind that the very essence of the $G-$BSDE theory requires that the data of the equation, meaning here the generator $\widehat F$, the terminal condition $\xi$ and the obstacle $L$, have to have some degree of regularity with respect to the $\omega$ variable. More precisely, they have to be quasi-continuous in $\omega$, which loosely speaking means that they must be uniformly continuous (for the uniform convergence topology) outside a "small" set (see the references for more details). This is inherent to the construction itself, as soon as the set $\Pc_H^\kappa$ is non--dominated, and cannot be avoided with this approach. Granted, it is also the case in our paper \cite{matoussi2013second}. However, since then, many progresses have been achieved in the {\rm 2BSDE} theory, and the recent paper \cite{possamai2018stochastic} has proved that the (non--reflected) {\rm 2BSDE} theory worked perfectly without {\it any} regularity assumption. Furthermore, a general {\it modus operandi} is given in \cite[Proposition 2.1 and Remark 4.2]{possamai2018stochastic} to extend those results to many type of {\rm {\rm 2BSDE}s}, including the reflected ones. This program has actually been carried out in the recent PhD thesis Noubiagain \cite{noubiagain2017equations}  (see also \cite{denis2017generalized2} and \cite{matoussi2018generalized}). Combined with the results and discussions of the present note, the {\rm 2RBSDE}s can therefore be defined in a much more general framework than the reflected $G-$BSDEs.

\end{itemize}

\section{A super--hedging duality for American options in uncertain, incomplete and nonlinear markets}\label{sec:american}

This short section is devoted to obtain some clarifications concerning the link made in \cite{matoussi2013second} between solutions to 2RBSDEs and super--hedging prices for American options under volatility uncertainty. The recent years have seen a flourishing of papers treating the above super--replication problem of American options in {discrete time} financial markets under uncertainty, allowing or not for static trading of European options, see among others Dolinsky \cite{dolinsky2017numerical}, Neuberger \cite{neuberger2007bounds}, Hobson and Neuberger \cite{hobson2016more,hobson2016value}, Bayraktar et al. \cite{bayraktar2017hedging}, Bayraktar and Zhou \cite{bayraktar2017super}, or Deng and Tan \cite{deng2016duality}, and Aksamit, Deng, Ob\l{}\'oj and Tan \cite{aksamit2016robust}. In a continuous--time setting, non--linear markets were considered by Dumitrescu, Quenez and Sulem \cite {dumitrescu2017game,dumitrescu2018american}, with some level of ambiguity, in the sense that both the non--linear driver of the wealth process and the default intensity could be not perfectly known. This however corresponds to families of probabilities which are absolutely continuous with respect to each other, and thus does not require to consider second--order BSDEs. As far as we know, beyond the results given in \cite{matoussi2013second}, there are no other results allowing to tackle volatility uncertainty in continuous--time (see however the recent contribution \cite{dumitrescu2017bsdes} for partial hedging issues). 

\medskip
Given that this is a corrigendum, we do not wish to go into too many details, and simply want to point out that \cite{matoussi2013second} only provided an upper bound for the super--hedging price of an American options as the initial value of a 2RBSDE, and that using techniques similar to the ones in \cite{dumitrescu2017game} for instance, it can readily be checked that this is actually the super--hedging price itself.

\section{General reflections}

\subsection{Uniqueness}
Let us now consider our second paper \cite{matoussi2014second}. First of all, the definition of a solution should be replaced by the following.
\begin{Definition}\label{death2}
We say $(Y,Z)\in \mathbb D^{2,\kappa}_H\times\mathbb H^{2,\kappa}_H$ is a solution to a {\rm 2DRBSDE} if

\begin{itemize}
\item[$\bullet$] $Y_T=\xi$, $\mathcal P_H^\kappa-q.s$.

\vspace{0.35em}
\item[$\bullet$] $\forall \mathbb P \in \mathcal P_H^\kappa$, the process $V^{\mathbb P}$ defined below has paths of bounded variation $\mathbb P-a.s.$
\begin{equation}
V_t^{\mathbb P}:=Y_0-Y_t - \int_0^t\widehat{F}_s(Y_s,Z_s)ds+\int_0^tZ_sdB_s, \text{ } 0\leq t\leq T, \text{  } \mathbb P-a.s.,
\label{2dRBSDE.kref}
\end{equation}
and admits the following decomposition
\begin{equation}\label{eq:decomp}
V_t^{\mathbb P}=K_t^{\mathbb P}-\mathcal K_t^{\mathbb P,+},\;t\in[0,T],\; \P-a.s.,
\end{equation}
where the two processes $K^\P$ and $\mathcal K^{\P,+}$ are non-decreasing, and where $\mathcal K^{\P,+}$ satisfies the following Skorokhod condition
\begin{equation}\label{eq:sko2}
\int_0^T\big(S_{s^-}-Y_{s^-}\big)d\mathcal K^{\P,+}_s=0,\; \P-a.s.
\end{equation}
\item[$\bullet$] We have the following minimality condition for $0\leq t\leq T$
\begin{equation}\label{eq:new2}
\underset{\mathbb P'\in\Pc^\kappa_H(t^+,\P)}{{\rm essinf}^\P}\; \mathbb E_t^{\P'}\left[\int_t^TM^{t,\P'}_sd\big(V^{\P'}_s+k_s^{\P',+}-k^{\P',-}_s\big)\right]=0, \text{  } \mathbb P-a.s., \; 0\leq t\leq T, \text{ } \forall \mathbb P \in \mathcal P_H^\kappa,
\end{equation}
where $M^{t,\P}$ is defined as in \eqref{eqM} but using the solution $(y^\P,z^\P)$ of the doubly reflected BSDE under $\P$.
\item[$\bullet$] $L_t\leq Y_t\leq S_t$, $\mathcal P_H^\kappa-q.s.$
\end{itemize}
\end{Definition}

There are two main differences with the earlier definition in our paper \cite{matoussi2014second}. The first one is obviously the new minimality condition \eqref{eq:new2}, which is simply the version with two obstacles of \eqref{eq:new}. The second main difference is the decomposition \eqref{eq:decomp} of the bounded variation process $V^\P$. It is not really new, {\it per se}, as it was already implicit in the existence proof we provided in \cite{matoussi2014second}, see in particular the lignes between the statements of Lemma 4.3 and Proposition 4.4. In particular, it does not require any additional argument in the existence proof. 

\vspace{0.5em}
Under this new definition, the proof of uniqueness of a solution follows exactly the same lignes as in the lower obstacle case described above, it suffices to use the new minimality condition \eqref{eq:new2}, which is equivalent to the representation formula of the solution to the 2D{\rm RBSDE} as an essential supremum of solutions of the associated D{\rm RBSDE}s.

\subsection{{\it A priori} estimates}
The main change in \cite{matoussi2014second} with the introduction of the new minimality condition \eqref{eq:new2} above concerns the {\it a priori} estimates for 2D{\rm RBSDE}s. Let us start with Proposition 3.5 in \cite{matoussi2014second}, which has to be corrected as follows. Notice that the references (2.5) and (2,6) are the ones from \cite{matoussi2014second}, and not the present paper.

\begin{Proposition}
Let Assumption $2.3$ hold. Assume $\xi\in\mathbb L^{2,\kappa}_H$ and $(Y,Z)\in\mathbb D^{2,\kappa}_H\times\mathbb H^{2,\kappa}_H$ is a solution to the {\rm 2DRBSDE} $(2.5)$. Let $\{(y^\P,z^\P,k^{\P,+},k^{\P,-})\}_{\P\in\Pc^\kappa_H}$ be the solutions of the corresponding {\rm DRBSDE}s $(2.6)$. Then we have the following results for all $t\in[0,T]$ and for all $\P\in\Pc^\kappa_H$
\begin{itemize}[leftmargin=*]
\item[$(i)$] $\displaystyle V_t^{\P,+}:=\int_0^t{\bf 1}_{Y_{s^-}=L_{s^-}}dV_s^\P=\int_0^t{\bf 1}_{Y_{s^-}=L_{s^-}}dk_s^{\P,-}$, $\P-a.s.,$ and is therefore a non--decreasing process.
\item[$(ii)$] $\displaystyle V_t^{\P,-}:=\int_0^t{\bf 1}_{y^\P_{s^-}=S_{s^-}}dV_s^\P=-\int_0^t{\bf 1}_{y^\P_{s^-}=S_{s^-}}dk_s^{\P,+}$, $\P-a.s.,$ and is therefore a non--increasing process.
\end{itemize}
\end{Proposition}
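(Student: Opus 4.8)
The plan is to work with the difference $\delta Y:=Y-y^\P$, which is non--negative because the minimality condition \eqref{eq:new2} is equivalent to the representation $Y=\underset{\P'}{\esup^\P}\,y^{\P'}$, so that $Y\geq y^\P$, $\P-$a.s. Writing $v^\P:=k^{\P,-}-k^{\P,+}$ for the bounded variation part of $y^\P$, both assertions will follow from the single \emph{contact identity}
\begin{equation*}
{\bf 1}_{\{\delta Y_{s^-}=0\}}\,d\big(V^\P-v^\P\big)_s=0,\;\P-a.s.,
\end{equation*}
together with two elementary facts obtained from the sandwiching $L_{s^-}\leq y^\P_{s^-}\leq Y_{s^-}\leq S_{s^-}$: on $\{Y_{s^-}=L_{s^-}\}$ one necessarily has $y^\P_{s^-}=L_{s^-}$, and on $\{y^\P_{s^-}=S_{s^-}\}$ one necessarily has $Y_{s^-}=S_{s^-}$; in both cases $\delta Y_{s^-}=0$. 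I will also use that, away from the degenerate situation $L_{s^-}=S_{s^-}$ (where $Y$ is pinned and the statement is trivial), a lower contact time cannot be an upper contact time.

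Granting the contact identity, part $(i)$ is obtained as follows. Since $k^{\P,+}$ is carried by $\{y^\P_{s^-}=S_{s^-}\}$, its Skorokhod condition forces ${\bf 1}_{\{Y_{s^-}=L_{s^-}\}}dk^{\P,+}_s=0$, whence ${\bf 1}_{\{Y_{s^-}=L_{s^-}\}}dv^\P_s={\bf 1}_{\{Y_{s^-}=L_{s^-}\}}dk^{\P,-}_s$. Combining this with the contact identity gives ${\bf 1}_{\{Y_{s^-}=L_{s^-}\}}dV^\P_s={\bf 1}_{\{Y_{s^-}=L_{s^-}\}}dk^{\P,-}_s$, which is exactly the first claim; since $k^{\P,-}$ is non--decreasing, $V^{\P,+}$ is non--decreasing. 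Part $(ii)$ is symmetric: on $\{y^\P_{s^-}=S_{s^-}\}$ the lower reflection $k^{\P,-}$ (carried by $\{y^\P_{s^-}=L_{s^-}\}$) is inactive, so ${\bf 1}_{\{y^\P_{s^-}=S_{s^-}\}}dv^\P_s=-{\bf 1}_{\{y^\P_{s^-}=S_{s^-}\}}dk^{\P,+}_s$, and the contact identity yields ${\bf 1}_{\{y^\P_{s^-}=S_{s^-}\}}dV^\P_s=-{\bf 1}_{\{y^\P_{s^-}=S_{s^-}\}}dk^{\P,+}_s$, a non--increasing process since $k^{\P,+}$ is non--decreasing.

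It remains to prove the contact identity, which is the heart of the matter. Linearising the generator as in \eqref{eqF}, the dynamics of $\delta Y$ read $d\delta Y_s=-(\lambda_s\delta Y_s+\eta_s\widehat a_s^{1/2}\delta Z_s)ds+\delta Z_s\,dB_s-d(V^\P-v^\P)_s$, so that $d(V^\P-v^\P)$ is precisely the singular reflection part of $\delta Y$. Since $d\langle\delta Y\rangle_s=|\widehat a_s^{1/2}\delta Z_s|^2 ds$, the occupation--density formula applied to the non--negative semimartingale $\delta Y$ shows that $\widehat a_s^{1/2}\delta Z_s=0$ for $ds-$almost every $s$ with $\delta Y_s=0$; consequently neither the drift nor the martingale part of $\delta Y$ charges $\{\delta Y_{s^-}=0\}$. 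Using \eqref{eq:sko2} to replace $dV^\P$ by $dK^\P$ on the lower contact set, the generalised comparison theorem of El Karoui et al. (Theorem $8.3$ in \cite{el2008backward}), already invoked in the proof of Theorem \ref{uniqueref}, then provides the inequality ${\bf 1}_{\{Y_{s^-}=L_{s^-}\}}dK^\P_s\leq{\bf 1}_{\{Y_{s^-}=L_{s^-}\}}dk^{\P,-}_s$ and its upper analogue.

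The main obstacle is to upgrade these inequalities to the equality required by the contact identity; equivalently, one must show that $\delta Y$ does not accumulate any local time at the level $0$ on the two contact sets. The point is that on $\{Y_{s^-}=L_{s^-}=y^\P_{s^-}\}$ the processes $Y$ and $y^\P$ are both held against the common obstacle $L$ and move together, so $\delta Y$ is pinned at $0$ rather than reflected off it, which is why no singular ``local--time'' contribution of $Y-y^\P$ appears. Making this rigorous is exactly the contact argument underlying Remark $3.6$ in \cite{matoussi2014second}, and it requires the c\`adl\`ag Tanaka--Meyer formula in order to control the jumps of the obstacle together with the Skorokhod conditions of the associated {\rm DRBSDE}. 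I expect this step, rather than the bookkeeping of parts $(i)$ and $(ii)$, to be where the real work lies.
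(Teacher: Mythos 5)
Your reduction of $(i)$ and $(ii)$ to the single identity ${\bf 1}_{\{\delta Y_{s^-}=0\}}\,d(V^\P-v^\P)_s=0$ on the two contact sets is correct bookkeeping: the sandwiching $L\leq y^\P\leq Y\leq S$ and the Skorokhod conditions for $k^{\P,\pm}$ do turn that identity into exactly the two claims, and the separation of the obstacles disposes of the degenerate overlap. The problem is that you never prove the identity. What comes essentially for free from the non--negativity of $\delta Y=Y-y^\P$ (via Meyer--Tanaka, or via the comparison argument you invoke from \cite[Theorem 8.3]{el2008backward}) is only the \emph{one--sided} bound ${\bf 1}_{\{\delta Y_{s^-}=0\}}\,d(V^\P-v^\P)_s\leq 0$, i.e.\ $dV^\P_s\leq dk^{\P,-}_s$ on $\{Y_{s^-}=L_{s^-}\}$ and $dV^\P_s\leq -dk^{\P,+}_s$ on $\{y^\P_{s^-}=S_{s^-}\}$; note in passing that the cited comparison theorem delivers an ordering of increments of the contact parts over intervals, not directly an ordering of measures restricted to a predictable set, so even this step needs to be written out. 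The reverse inequality — equivalently, the absence of local time of $\delta Y$ at level $0$ on the contact sets — is the entire content of the Proposition, and your paragraph about $Y$ and $y^\P$ being ``pinned together against the obstacle'' is a heuristic, not an argument; you say yourself that this is ``where the real work lies.'' A proof that stops at the point where the real work lies has a genuine gap. Be aware also that your contact identity is stated on all of $\{\delta Y_{s^-}=0\}$, which is strictly more than what $(i)$--$(ii)$ require: on $\{\delta Y_{s^-}=0\}\cap\{L_{s^-}<Y_{s^-}<S_{s^-}\}$ the second--order part $K^\P$ of the decomposition \eqref{eq:decomp} is not controlled by any reflection mechanism, and since $V^\P-v^\P$ is not monotone one cannot run the usual 2BSDE argument to rule out an increase of $K^\P$ there; you should restrict the claim to the two contact sets.

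For comparison, the present note does not reprove this statement at all: it observes that the proof of part $(ii)$ given in \cite{matoussi2014second} (Proposition 3.5 there, together with Remark 3.6) never used the defective minimality condition and therefore survives the correction unchanged, and that $(i)$ is obtained symmetrically. The substance of the corrigendum here is precisely that audit of which ingredients the old proof relied on — not a new argument. So your plan is compatible in spirit with the existing proof (both hinge on a Tanaka--type analysis of $Y-y^\P$ at the contact sets), but as submitted it replaces the one nontrivial lemma by an expectation that it can be proved, and one minor misattribution should be fixed: the inequality $Y\geq y^\P$ does not require the minimality condition \eqref{eq:new2}; it follows from the comparison step alone, which is exactly why the Proposition is independent of the flaw being corrected.
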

The proof of $(ii)$ above is given in \cite{matoussi2014second} and does not use the minimality condition and is thus correct. $(i)$ can be proved similarly. The issue now is that we no longer have a nice Jordan decomposition of $V^\P$, which changes a lot how we can prove and obtain {\it a priori} estimates for the solution. 

\vspace{0.5em}
Actually, the main point here is to rely on the decomposition \eqref{eq:decomp}, which is almost a Jordan decomposition. In the proof of Theorem 3.7 in \cite{matoussi2014second}, the proof of the estimates for $Y$, $y^\P$, $z^\P$, $k^{\P,+}$ and $k^{\P,-}$ does not change and is still correct. In the estimate for $Z$, corresponding to the calculations in $(3.15)$ in \cite{matoussi2014second}, one has to use the decomposition \eqref{eq:decomp} for $V^\P$, and the fact that we know that for some constant $C$ independent of $\P$
$$\mathbb E^\P\big[|K_T^\P|^2+|\Kc_T^\P|^2\big]\leq C.$$
Indeed, this is a consequence of \cite[Lemma A.11]{matoussi2014second} and the fact that the unique solution to the 2D{\rm RBSDE} is constructed through the Doob--Meyer decomposition of a doubly reflected $g-$supermartingale. The rest of the proof is then the same, still using the decomposition \eqref{eq:decomp}. Thus Theorem 3.7 in \cite{matoussi2014second} should be replaced by

\begin{Theorem}\label{destimatesref}
Let Assumptions 2.3, 2.5 and 2.8 hold. Assume $\xi\in\mathbb L^{2,\kappa}_H$ and $(Y,Z)\in \mathbb D^{2,\kappa}_H\times\mathbb H^{2,\kappa}_H$ is a solution to the 2D{\rm RBSDE} $(2.5)$. Let $\left\{(y^\mathbb P,z^\mathbb P,k^{\mathbb P,+},k^{\mathbb P,-})\right\}_{\mathbb P\in\mathcal P^\kappa_H}$ be the solutions of the corresponding {\rm DRBSDE}s $(2.6)$. Then, there exists a constant $C_\kappa$ depending only on $\kappa$, $T$ and the Lipschitz constant of $\widehat F$ such that
\begin{align*}
&\No{Y}^2_{\mathbb D^{2,\kappa}_H}+\No{Z}^2_{\mathbb H^{2,\kappa}_H}+\underset{\mathbb P\in \mathcal P^\kappa_H}{\sup}\left\{\big\|y^\mathbb P\big\|^2_{\mathbb D^{2}(\mathbb P)}+\big\|z^\mathbb P\big\|^2_{\mathbb H^{2}(\mathbb P)}\right\}\\
&\hspace{0.9em}+\displaystyle\underset{\mathbb P\in \mathcal P^\kappa_H}{\sup}\mathbb E^\mathbb P\left[{\rm Var}_{0,T}\big(V^\P\big)^2+\big(K_T^\P\big)^2+\big(\Kc_T^{\P,+}\big)^2+\big(k_T^{\mathbb P,+}\big)^2+\big(k_T^{\mathbb P,-}\big)^2\right]\leq C_\kappa\left(\No{\xi}^2_{\mathbb L^{2,\kappa}_H}+\phi^{2,\kappa}_H+\psi^{2,\kappa}_H+\varphi^{2,\kappa}_H+\zeta^{2,\kappa}_H\right).
\end{align*}
\end{Theorem}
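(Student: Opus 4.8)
The plan is to reproduce the proof of Theorem 3.7 in \cite{matoussi2014second} line by line for every quantity except the bounded variation process $V^\P$ and the integrand $Z$, and to re-derive those two estimates using the decomposition \eqref{eq:decomp} of $V^\P$ in place of the Jordan decomposition that is no longer available under the new minimality condition \eqref{eq:new2}.

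First I would dispose of the estimates that are insensitive to the change of minimality condition. The bounds on $Y$, on the solutions $(y^\P,z^\P,k^{\P,+},k^{\P,-})$ of the fixed-$\P$ doubly reflected BSDEs, and hence on $\sup_\P\big(\|y^\P\|^2_{\mathbb D^2(\P)}+\|z^\P\|^2_{\mathbb H^2(\P)}\big)$ together with $k_T^{\P,+}$ and $k_T^{\P,-}$, are obtained from the classical a priori estimates for doubly reflected BSDEs applied under each fixed $\P\in\Pc^\kappa_H$. Uniformity in $\P$ is automatic because the data quantities $\phi^{2,\kappa}_H,\psi^{2,\kappa}_H,\varphi^{2,\kappa}_H,\zeta^{2,\kappa}_H$ are themselves defined as suprema over $\Pc^\kappa_H$; and the control of $\|Y\|^2_{\mathbb D^{2,\kappa}_H}$ follows from the representation of $Y$ as the essential supremum of the $y^\P$, as used in the uniqueness argument above. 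None of this requires a new idea.

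The crucial new input is the uniform bound $$\underset{\P\in\Pc^\kappa_H}{\sup}\,\E^\P\big[(K_T^\P)^2+(\Kc_T^{\P,+})^2\big]\leq C,$$ for a constant $C$ depending only on the data. I would derive it from \cite[Lemma A.11]{matoussi2014second}, using that the solution $Y$ is constructed as the aggregation of the values of a doubly reflected $g-$supermartingale, whose Doob--Meyer decomposition furnishes exactly the two non-decreasing processes $K^\P$ and $\Kc^{\P,+}$ entering \eqref{eq:decomp}; Lemma A.11 then bounds their terminal $L^2$ norms by the data. The total variation comes for free afterwards: since $V^\P=K^\P-\Kc^{\P,+}$ with both components non-decreasing, one has ${\rm Var}_{0,T}(V^\P)\leq K_T^\P+\Kc_T^{\P,+}$, and therefore $\sup_\P\E^\P\big[{\rm Var}_{0,T}(V^\P)^2\big]\leq 2C$.

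It remains to estimate $Z$, which is precisely where the absence of a Jordan decomposition is felt. Applying It\^o's formula to $|Y|^2$ under a fixed $\P$ produces the term $\E^\P\big[\int_0^T|\widehat a_s^{1/2}Z_s|^2ds\big]$, the generator contribution being absorbed through its Lipschitz property (Assumption 2.3) and Young's inequality, while the stochastic integral is handled by a standard localization/Burkholder--Davis--Gundy argument. The delicate term is $\int_0^T Y_{s^-}\,dV_s^\P$, whose sign was read off the Jordan decomposition in \cite{matoussi2014second}; here I would instead split it as $\int_0^T Y_{s^-}\,dK_s^\P-\int_0^T Y_{s^-}\,d\Kc_s^{\P,+}$ and bound each piece by a Cauchy--Schwarz estimate of the form $\big(\E^\P[\sup_{[0,T]}|Y_s|^2]\big)^{1/2}\big(\E^\P[(K_T^\P+\Kc_T^{\P,+})^2]\big)^{1/2}$, invoking the uniform bound of the previous paragraph. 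Rearranging then yields the estimate for $\|Z\|^2_{\mathbb H^{2,\kappa}_H}$. The main obstacle is thus the third step, namely the $L^2$ control of $K_T^\P$ and $\Kc_T^{\P,+}$ uniformly in $\P$ without recourse to a Jordan decomposition; once it is secured, the rest is a routine transcription of computation $(3.15)$ in \cite{matoussi2014second} with \eqref{eq:decomp} carried throughout.
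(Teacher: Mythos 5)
Your proposal matches the paper's own argument essentially step for step: the estimates for $Y$, $y^\P$, $z^\P$, $k^{\P,\pm}$ are carried over unchanged, the uniform $L^2$ bound on $K_T^\P$ and $\Kc_T^{\P,+}$ is obtained from \cite[Lemma A.11]{matoussi2014second} together with the Doob--Meyer construction of the solution, and the estimates for ${\rm Var}_{0,T}(V^\P)$ and for $Z$ (i.e.\ the computation $(3.15)$ of \cite{matoussi2014second}) are rerun with the decomposition \eqref{eq:decomp} in place of the Jordan decomposition. The paper states this more tersely, but your fleshed-out version of the $Z$ estimate is exactly what is intended, so there is nothing to add.
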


Next, concerning the estimates for the difference between two solutions, the proof of Theorem 3.8 in \cite{matoussi2014second} also has to be modified. More precisely, the three lignes after (3.19) should be erased. Then the proof of the estimate for $\delta Z$ is still correct. However, we only have control over the difference between $V^{\P,1}$ and $V^{\P,2}$, not individually for $K^{\P,1}$ and $K^{\P,2}$ on the one hand, and $\Kc^{\P,1}$ and $\Kc^{\P,2}$ on the other hand. Theorem 3.8 of \cite{matoussi2014second} should therefore be replaced by
\begin{Theorem}\label{estimates2d}
Let Assumptions 2.3, 2.5 and 2.8 hold. For $i=1,2$, let $(Y^i,Z^i)$ be the solutions to the 2D{\rm RBSDE} $(2.5)$ with terminal condition $\xi^i$, upper obstacle $S$ and lower obstacle $L$. Then, there exists a constant $C_\kappa$ depending only on $\kappa$, $T$ and the Lipschitz constant of $F$ such that
\begin{align*}
&\No{Y^1-Y^2}_{\mathbb D^{2,\kappa}_H}\leq C\No{\xi^1-\xi^2}_{\mathbb L^{2,\kappa}_H}\\
&\No{Z^1-Z^2}^2_{\mathbb H^{2,\kappa}_H}+\underset{\mathbb P\in \mathcal P^\kappa_H}{\sup}\mathbb E^\mathbb P\left[\underset{0\leq t\leq T}{\sup}\abs{V_t^{\mathbb P,1}-V_t^{\mathbb P,2}}^2\right]\\
&\leq C\No{\xi^1-\xi^2}_{\mathbb L^{2,\kappa}_H}\left(\No{\xi^1}_{\mathbb L^{2,\kappa}_H}+\No{\xi^1}_{\mathbb L^{2,\kappa}_H}+(\phi^{2,\kappa}_H)^{1/2}+(\psi^{2,\kappa}_H)^{1/2}+(\varphi^{2,\kappa}_H)^{1/2}+(\zeta^{2,\kappa}_H)^{1/2}\right).
\end{align*}

\end{Theorem}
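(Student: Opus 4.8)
The plan is to prove the two displayed inequalities separately, reusing as much of the machinery of \cite{matoussi2014second} as possible. For the bound on $\No{Y^1-Y^2}_{\mathbb D^{2,\kappa}_H}$, I would start from the representation of the solution of the 2D{\rm RBSDE} as an essential supremum over $\Pc^\kappa_H(t^+,\P)$ of the solutions $y^{\P'}$ of the associated D{\rm RBSDE}s, which follows from the new minimality condition \eqref{eq:new2} exactly as in the uniqueness argument. Writing $\delta Y:=Y^1-Y^2$, the elementary inequality
\begin{equation*}
\abs{\delta Y_t}\leq \underset{\P'\in\Pc^\kappa_H(t^+,\P)}{\esup^\P}\abs{y^{\P',1}_t-y^{\P',2}_t},\; \P-a.s.,
\end{equation*}
then transfers the classical $L^2$ stability estimate for doubly reflected BSDEs — in which only the terminal condition varies, the two obstacles $S$ and $L$ being fixed — to the second--order setting, and yields $\No{\delta Y}_{\mathbb D^{2,\kappa}_H}\leq C\No{\xi^1-\xi^2}_{\mathbb L^{2,\kappa}_H}$. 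This part involves no bounded variation process and carries over unchanged.

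For the estimate on $\delta Z:=Z^1-Z^2$, I would fix $\P\in\Pc^\kappa_H$ and work under $\P$. With $\delta V^\P:=V^{\P,1}-V^{\P,2}$, the difference solves
\begin{equation*}
\delta Y_t=\delta\xi+\int_t^T\big(\widehat F_s(Y^1_s,Z^1_s)-\widehat F_s(Y^2_s,Z^2_s)\big)ds-\int_t^T\delta Z_sdB_s+\delta V^\P_T-\delta V^\P_t,\; \P-a.s.
\end{equation*}
Linearising the generator difference as $\lambda_s\delta Y_s+\eta_s\widehat a_s^{1/2}\delta Z_s$ with bounded $\lambda,\eta$ as in \eqref{eqF}, I would apply It\^o's formula to $\abs{\delta Y}^2$ under $\P$, take expectations so that the stochastic integral vanishes, and use the boundedness of $\lambda$ and $\eta$ together with Young's inequality to absorb the linear $\delta Z$ contribution into $\mathbb E^\P[\int_0^T\abs{\widehat a_s^{1/2}\delta Z_s}^2ds]$. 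After this, the entire estimate reduces to controlling the single cross term $\mathbb E^\P[\int_0^T\delta Y_{s^-}d(\delta V^\P_s)]$.

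This cross term is the crux of the correction, and the \emph{main obstacle}. Since each $V^{\P,i}$ is now only the difference $K^{\P,i}-\Kc^{\P,i,+}$ of two non--decreasing processes, $\delta V^\P$ carries no sign, so neither the Skorokhod condition \eqref{eq:sko2} nor the minimality condition \eqref{eq:new2} can be invoked to make it vanish, as was incorrectly done before. Instead I would expand $d(\delta V^\P_s)$ along its four monotone components $K^{\P,1},K^{\P,2},\Kc^{\P,1,+},\Kc^{\P,2,+}$, bound each resulting integral by $\sup_{0\leq s\leq T}\abs{\delta Y_s}$ times the terminal value of the corresponding non--decreasing process, and apply Cauchy--Schwarz in expectation. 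This produces a bound of the form
\begin{equation*}
\Big(\mathbb E^\P\big[\underset{0\leq s\leq T}{\sup}\abs{\delta Y_s}^2\big]\Big)^{1/2}\Big(\mathbb E^\P\big[(K_T^\P)^2+(\Kc_T^{\P,+})^2\big]\Big)^{1/2},
\end{equation*}
whose first factor is controlled by $C\No{\xi^1-\xi^2}_{\mathbb L^{2,\kappa}_H}$ from the first step, and whose second factor is controlled, uniformly in $\P$, by the a priori estimates of Theorem \ref{destimatesref}; this is precisely how the data--dependent factor $\No{\xi^1}_{\mathbb L^{2,\kappa}_H}+\No{\xi^2}_{\mathbb L^{2,\kappa}_H}+(\phi^{2,\kappa}_H)^{1/2}+(\psi^{2,\kappa}_H)^{1/2}+(\varphi^{2,\kappa}_H)^{1/2}+(\zeta^{2,\kappa}_H)^{1/2}$ enters the right--hand side. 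Note that only the combined bound $\mathbb E^\P[(K_T^\P)^2+(\Kc_T^{\P,+})^2]\leq C$ is used, never any individual control on $K^{\P,1}-K^{\P,2}$, which is exactly why the statement bounds only the difference $V^{\P,1}-V^{\P,2}$.

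Finally, for the supremum of $\abs{\delta V^\P}$ I would avoid a second application of It\^o and read it directly off the defining relation
\begin{equation*}
\delta V^\P_t=\delta Y_0-\delta Y_t-\int_0^t\big(\widehat F_s(Y^1_s,Z^1_s)-\widehat F_s(Y^2_s,Z^2_s)\big)ds+\int_0^t\delta Z_sdB_s.
\end{equation*}
Taking the supremum over $t$, using the Lipschitz bound on the generator difference and the Burkholder--Davis--Gundy inequality on the stochastic integral, the right--hand side is bounded in terms of $\No{\delta Y}_{\mathbb D^{2,\kappa}_H}$ and $\No{\delta Z}_{\mathbb H^{2,\kappa}_H}$, both already estimated. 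Taking the supremum over $\P\in\Pc^\kappa_H$ throughout then assembles the second displayed inequality and completes the plan.
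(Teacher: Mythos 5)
Your plan is correct and follows essentially the same route as the corrigendum's (very terse) proof, which simply reruns the argument of Theorem 3.8 in \cite{matoussi2014second} after deleting the step that claimed individual control of the non--decreasing components: the $\delta Y$ bound via the representation formula and classical D{\rm RBSDE} stability, the $\delta Z$ bound via linearisation and It\^o with the cross term $\mathbb E^\P\big[\int_0^T\delta Y_{s^-}d(\delta V^\P_s)\big]$ handled by Cauchy--Schwarz against $\mathbb E^\P\big[(K_T^\P)^2+(\Kc_T^{\P,+})^2\big]$ from the {\it a priori} estimates, and $\sup_t\abs{\delta V^\P_t}$ read directly off the defining relation. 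Your observation that only the difference $V^{\P,1}-V^{\P,2}$ (never the individual $K^{\P,i}$ or $\Kc^{\P,i,+}$) can be estimated is precisely the point of the correction.
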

Notice also that Remark $3.9$ in \cite{matoussi2014second} no longer holds. Similarly, Remark 3.12 should be deleted. As a consequence, in Proposition 3.10 in \cite{matoussi2014second}, the constant $\gamma$ should always be taken as equal to $0$. Finally, direct computations using the decomposition \eqref{eq:decomp} prove that Proposition 3.14 in \cite{matoussi2014second} should be replaced by
\begin{Proposition}
Let Assumptions 2.3, 2.5, 2.8 and 3.13 hold. Let $(Y,Z)$ be the solution to the $2$D{\rm RBSDE}, then for all $\mathbb P\in\mathcal P^\kappa_H$
\begin{align}
&Z_t=P_t, \text{ }dt\times\mathbb P-a.s.\text{ on the set }\left\{Y_{t^-}=S_{t^-}\right\},
\end{align}
and there exists a progressively measurable process $(\alpha_t^\mathbb P)_{0\leq t\leq T}$ such that $0\leq \alpha\leq 1$ and
$$d\Kc_t^{\mathbb P,+}=\alpha_t^\mathbb P{\bf 1}_{Y_{t^-}=S_{t^-}}\left(\left[\widehat F_t(S_t,P_t)+U_t\right]^+dt+dC_t^++dK^\P_t\right).$$
\end{Proposition}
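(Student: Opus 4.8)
The plan is to treat the two assertions in turn, reducing both to the analysis of the nonnegative c\`adl\`ag semimartingale $X:=S-Y\geq 0$, which vanishes on the upper contact set $\{Y_{t^-}=S_{t^-}\}$. Throughout I use the semimartingale decomposition of $S$ granted by Assumption $3.13$, namely $dS_t=U_tdt+P_tdB_t+dC_t^+-dC_t^-$ with $dC^\pm$ singular with respect to $dt$, together with the dynamics of $Y$ read off from \eqref{2dRBSDE.kref} and \eqref{eq:decomp}, that is $dY_t=-\widehat F_t(Y_t,Z_t)dt+Z_tdB_t-dK_t^\P+d\Kc_t^{\P,+}$. Consequently the finite--variation part of $X$ is $dX_s^{\rm fv}=(\widehat F_s(Y_s,Z_s)+U_s)ds+dC_s^+-dC_s^-+dK_s^\P-d\Kc_s^{\P,+}$ and its continuous martingale part is $(P_s-Z_s)dB_s$.

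For the identity $Z=P$ on $\{Y_{t^-}=S_{t^-}\}$, I would argue by occupation times. Since $d\langle X^c\rangle_s=\big|\widehat a_s^{1/2}(Z_s-P_s)\big|^2ds$, the occupation density formula for the semimartingale $X$ gives $\int_0^t{\bf 1}_{\{X_{s^-}=0\}}d\langle X^c\rangle_s=\int_{\{0\}}\ell_t^a\,da=0$, where $\ell^a$ denotes the local time of $X$ at level $a$. Hence $\int_0^T{\bf 1}_{\{Y_{s^-}=S_{s^-}\}}\big|\widehat a_s^{1/2}(Z_s-P_s)\big|^2ds=0$, $\P-$a.s., and since $\widehat a_s$ is nondegenerate for every $\P\in\Pc_H^\kappa$ this yields $Z_s=P_s$, $dt\times\P-$a.s. on the contact set. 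In particular $\widehat F_s(Y_s,Z_s)=\widehat F_s(S_s,P_s)$ there.

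For the structure of $\Kc^{\P,+}$, the Skorokhod condition \eqref{eq:sko2} tells us that $d\Kc_t^{\P,+}={\bf 1}_{\{Y_{t^-}=S_{t^-}\}}d\Kc_t^{\P,+}$, so it only remains to identify the density of $\Kc^{\P,+}$ against a natural reference measure on the contact set. The key step is the measure inequality ${\bf 1}_{\{X_{s^-}=0\}}dX_s^{\rm fv}\geq 0$, expressing that the finite--variation part of the nonnegative process $X$ cannot decrease while $X$ sits at its minimum value $0$: its jump part contributes ${\bf 1}_{\{X_{s^-}=0\}}\Delta X_s={\bf 1}_{\{X_{s^-}=0\}}X_s\geq 0$, while its continuous part contributes a nonnegative multiple of the local time $d\ell^0$ of $X$ at $0$ (the continuous martingale part having dropped out on the contact set by the first assertion). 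Plugging in the explicit expression for $dX_s^{\rm fv}$ and rearranging gives, as measures on $\{Y_{s^-}=S_{s^-}\}$, the bound $d\Kc_s^{\P,+}\leq(\widehat F_s(S_s,P_s)+U_s)ds+dC_s^++dK_s^\P-dC_s^-$. Since $\Kc^{\P,+}$ is nondecreasing, the contribution of the nonpositive measure $-dC^-$ must vanish, so $\Kc^{\P,+}$ charges neither $dC^-$ nor the complement of the contact set; decomposing against the mutually singular pieces $dt$, $dC^+$ and $dK^\P$ and taking the positive part of the Lebesgue density forces $d\Kc^{\P,+}$ to be dominated by ${\bf 1}_{\{Y_{s^-}=S_{s^-}\}}\big([\widehat F_s(S_s,P_s)+U_s]^+ds+dC_s^++dK_s^\P\big)$. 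A Radon--Nikodym derivative then produces the process $\alpha^\P\in[0,1]$ with the claimed representation.

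The main obstacle is the measure inequality ${\bf 1}_{\{X_{s^-}=0\}}dX_s^{\rm fv}\geq 0$: making rigorous the claim that, at the times when the nonnegative semimartingale $X=S-Y$ touches $0$, its finite--variation part is nondecreasing requires a careful Tanaka/local--time decomposition that cleanly separates the continuous and the (possibly singular, through $dC$) jump contributions, and uses the first assertion to discard the continuous martingale part on the contact set. Once this is in place, the separation into mutually singular measures and the Radon--Nikodym step are routine.
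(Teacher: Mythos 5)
Your proposal is correct and takes essentially the same route as the paper, which disposes of this statement by invoking ``direct computations using the decomposition \eqref{eq:decomp}'', i.e.\ precisely the It\^o--Tanaka/local-time analysis of the nonnegative semimartingale $X=S-Y$ underlying Proposition 3.14 of the original paper, with $dV^\P$ replaced by $dK^\P-d\Kc^{\P,+}$. The ``measure inequality'' you single out as the main obstacle is closed by exactly the Tanaka computation you sketch: since $X=X^+$, the Meyer--It\^o formula gives $\int_0^t\mathbf{1}_{\{X_{s^-}=0\}}dX_s=\tfrac12\ell^0_t+\sum_{0<s\leq t}\mathbf{1}_{\{X_{s^-}=0\}}X_s\geq 0$, the martingale contribution is removed via your first step, and the final domination $d\Kc^{\P,+}\leq\big[\widehat F+U\big]^+dt+dC^+ +dK^\P$ follows by passing to positive parts (no mutual singularity of the pieces is actually needed there).
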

  
\subsection{Existence}
Because we no longer control the total variation of $V^\P$ in Theorem \ref{estimates2d}, the proof of existence we gave in \cite{matoussi2014second} only holds for $\xi\in{\rm UC}_b(\Omega)$. However, this is not an issue at all, since the only reason we had to restrict to uniformly continuous terminal condition was to obtain the measurability result in \cite[Lemma 4.1]{matoussi2014second} and the dynamic programming principle of \cite{matoussi2014second}. Using the results of \cite{possamai2018stochastic}, in particular Proposition 2.1, these two results were obtained in \cite{noubiagain2017equations} (see also \cite{matoussi2018generalized}) for doubly reflected BSDEs, and allow to extend the construction carried out in \cite{matoussi2014second} to any $\xi\in\mathbb L^{2,\kappa}_H$.

\vspace{0.5em}
Finally, notice that similar arguments as in the lower reflected case should in principle allow to prove that wellposedness can be recovered for 2D{\rm RBSDE}s when the minimality condition \eqref{eq:new2} is replaced by asking that the process $K^\P$ in the decomposition \eqref{eq:decomp} satisfies some sort of Skorokhod condition similar to \eqref{eq:sko}, and provided that an conditions similar to Assumption \ref{assum} hold. In such a situation, both processes $K^\P$ and $\Kc^{\P,+}$ would then satisfy some Skorokhod type conditions. Such a program has been carried out recently for $G$--RBSDEs with two obstacles in \cite{li2019backward}, and \cite{li2020reflected}.

\subsection{Game options}
In Section 5.1 of \cite{matoussi2014second}, we introduced game options and claimed that the second order doubly reflected BSDEs (2D{\rm RBSDE}s for short) allow us to obtain super-- and sub--hedging prices for game options in financial markets with
volatility uncertainty. Actually,  it is proved that the amount $Y_t$, where $Y$ is the solution of the 2D{\rm RBSDE} in Definition 3.1 of \cite{matoussi2014second}, allows the seller of the game option to build a super--hedging strategy under any probability measure $\P'$.   
We emphasise however that we are not able to guarantee that this amount is optimal in the sense that it is the lowest value for which we can find a super--hedging strategy, though, as explained in the case of American options above in Section \ref{sec:american}, we strongly expect this result to hold. 

\medskip
For related problems in nonlinear market with default, but without volatility uncertainty, we refer the reader to the recent paper \cite{dumitrescu2017game} and the references therein. Note that \cite{dumitrescu2017game} studies an associated (non-linear) robust Dynkin game problem, in particular, in the case when there is default intensity ambiguity on the model. 


\vspace{0.5em}
Moreover, we have claimed  in  Section 5.1 of \cite{matoussi2014second}, that the whole interval of prices, given by $[\widetilde Y_t , Y_t ]$ with $\widetilde Y_t :=\underset{\mathbb P'\in\Pc^\kappa_H(t^+,\P)}{{\rm essinf}^\P}\; y^{\P'}_t$ (Page 2309 in \cite{matoussi2014second}), can be formally considered as arbitrage free. This also requires a proper justification. Actually, we may define the super--hedging price for a game option as in Section 6.1 of \cite{dolinsky2017numerical}. Using this definition, the link between the super--hedging price and the solution of a {\rm 2DRBSDE} will be considered in the forthcoming working paper \cite{matoussi2018generalized}.

 \bibliographystyle{plain}
 \small
\bibliography{bibliographyDylan}

 \end{document}